\newcommand{\RR}{\mathbb{R}}
\newcommand{\CC}{\mathbb{C}}
\newcommand{\ZZ}{\mathbb{Z}}
\newcommand{\QQ}{\mathbb{Q}}
\newcommand{\OO}{\mathcal{O}}
\newcommand{\HH}{\mathbb{H}}
\newcommand{\Dim}{\mathscr{D}}
\newcommand{\Mob}{\text{M\"ob}}
\DeclareMathOperator{\lcm}{lcm}
\DeclareMathOperator{\nrm}{nrm}
\DeclareMathOperator{\disc}{disc}
\DeclareMathOperator{\Mat}{Mat}
\DeclareMathOperator{\Isom}{Isom}
\DeclareMathOperator{\Div}{Div}
\theoremstyle{plain}
\newtheorem{theorem}{Theorem}[section]
\newtheorem{definition}[theorem]{Definition}
\newtheorem{conjecture}{Conjecture}[section]
\newtheorem{lemma}[theorem]{Lemma}
\theoremstyle{remark}
\newtheorem{alg}{Algorithm}[section]
\newtheorem{problem}{Open Problem}
\newtheorem{remark}{Remark}[section]
\begin{document}
\title{Generating Hyperbolic Isometry Groups by Elementary Matrices}

\author{Arseniy (Senia) Sheydvasser}
\address{Department of Mathematics, Technion, Haifa}
\email{sheydvasser@campus.technion.ac.il}

\date{\today}

\begin{abstract}
We consider three families of groups: the Bianchi groups $SL(2,\OO)$ where $\OO$ is the ring of integers of an imaginary, quadratic field; the groups $SL^\ddagger(2,\OO)$ where $\OO$ is a $\ddagger$-order of a definite, rational quaternion algebra with an orthogonal involution; and the groups $SL(2,\OO)$ where $\OO$ is an order of a definite, rational quaternion algebra. We show that such groups are generated by elementary matrices if
and only if $\OO$ is semi-Euclidean (or $\ddagger$-semi-Euclidean), which is a generalization of the usual notion of a Euclidean ring. The proofs are surprisingly simple and proceed by considering fundamental domains of Kleinian groups.
\end{abstract}

\maketitle

\section{Introduction:}\label{section:introduction}

Recently, Matthew Litman and the present author suggested something we termed the Unreasonable Slightness Conjecture\cite{Litman_Sheydvasser_2021}, so named following an earlier paper of Nica\cite{Nica_2011}. A slightly weaker form of it---specifically, one that omits mention of corresponding Apollonian-type packings---reads as follows.

\begin{conjecture}[(Amended) Unreasonable Slightness]\label{conjecture: unreasonable slightness}
Let $\Dim = 3$, $4$, or $5$. Let $\mathcal{A}$ be either an imaginary quadratic field (if $\Dim = 3$), a rational, definite quaternion algebra equipped with an orthogonal involution $\ddagger$ (if $\Dim = 4$), or a rational, definite quaternion algebra (if $\Dim = 5$). Let $\OO$ be a maximal order ($\ddagger$-order, respectively) of $\mathcal{A}$. Let $\Gamma$ either be $SL(2,\OO)$ if $\Dim = 3,5$ or $SL^\ddagger(2,\OO)$ if $\Dim = 4$. Let $\mathcal{E}$ be the subgroup generated by upper and lower triangular matrices. Then exactly one of the following is true.
    \begin{enumerate}
        \item $\OO$ is norm-Euclidean (respectively, norm $\ddagger$-Euclidean, if $\Dim = 4$) and $\Gamma = \mathcal{E}$.
        \item $\OO$ is not Euclidean (respectively, $\ddagger$-Euclidean, if $\Dim = 4)$ and $\mathcal{E}$ is an infinite-index, non-normal subgroup of $\Gamma$.
    \end{enumerate}
\end{conjecture}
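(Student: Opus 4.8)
The plan is to convert the algebraic equality $\Gamma=\mathcal{E}$ into a statement about the Ford fundamental domain of the (higher-dimensional) Kleinian group $\Gamma$ acting on hyperbolic space $\HH^{dim}$, and then to read off both the Euclidean hypothesis and the index of $\mathcal{E}$ directly from that domain. In all three cases $\Gamma$ acts on $\HH^{dim}$ as a lattice with a cusp at $\infty$, the boundary minus $\infty$ is $\RR^{dim-1}$, and the unipotent part of the stabilizer of $\infty$ is the order $\OO$ sitting inside that boundary as a lattice. The first and purely formal step is the reduction: $\Gamma=\mathcal{E}$ if and only if $\mathcal{E}\cdot\infty=\Gamma\cdot\infty$. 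Indeed the stabilizer of $\infty$ in $\Gamma$ is exactly the upper-triangular subgroup, which already lies in $\mathcal{E}$, so any $g\in\Gamma$ with $g\cdot\infty\in\mathcal{E}\cdot\infty$ factors as an element of $\mathcal{E}$ times a stabilizer, hence lies in $\mathcal{E}$. Thus everything comes down to understanding the $\mathcal{E}$-orbit of $\infty$, i.e. which cusps are reachable from $\infty$ by elementary row operations.

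Next I would bring in isometric spheres. A matrix with lower-left entry $c\ne0$ contributes an isometric sphere centered at the boundary point $-c^{-1}d$ of radius $\nrm(c)^{-1/2}$; the largest spheres, of radius $1$, are exactly those with $\nrm(c)=1$, and each such matrix is a product of upper- and lower-triangular matrices (it factors through the antidiagonal Weyl element, itself elementary), so it lies in $\mathcal{E}$. As $d$ varies, the centers $-c^{-1}d$ of these unit-radius spheres sweep out precisely the lattice $\OO\subset\RR^{dim-1}$. Consequently the radius-$1$ spheres cover the boundary if and only if the covering radius of $\OO$, measured in $\nrm$, is strictly less than $1$ --- and this is exactly the statement that $\OO$ is norm-Euclidean. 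This equivalence is the geometric criterion advertised in the abstract.

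Given the criterion, the norm-Euclidean half is immediate: if the unit spheres cover the boundary, then modulo the cusp subgroup the Ford domain is simply the region lying above them, every face-pairing of that domain is one of the triangular generators, and Poincar\'e's polyhedron theorem yields $\Gamma=\mathcal{E}$. For the reverse direction, if $\OO$ is not norm-Euclidean then some boundary point escapes all unit spheres, so a strictly smaller isometric sphere, coming from a matrix with $\nrm(c)>1$, genuinely appears as a floor of the Ford domain; its face-pairing is a non-elementary element, witnessing $\mathcal{E}\subsetneq\Gamma$.

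The hard part is upgrading $\mathcal{E}\subsetneq\Gamma$ to the full second alternative. Non-normality I expect to be clean: $\mathcal{E}$ contains the entire parabolic subgroup fixing $\infty$, so were $\mathcal{E}$ normal it would contain the parabolic of every cusp $g\cdot\infty$, and one checks that those parabolics together generate $\Gamma$, forcing $\mathcal{E}=\Gamma$. Infinite index is the genuine obstacle and the step that recovers the ``slightness'' of Nica: since covolume is multiplicative, finite index would force $\HH^{dim}/\mathcal{E}$ to have finite volume, so it suffices to show the Ford domain of $\mathcal{E}$ has infinite volume. Geometrically the uncovered regions of the boundary recur periodically under the cusp lattice and can be filled only by infinitely many ever-smaller spheres, strongly suggesting $\mathcal{E}$ is geometrically infinite; making this precise --- say by producing infinitely many $\mathcal{E}$-inequivalent cusps, or a surjection of $\Gamma$ onto an infinite group that kills every triangular matrix --- is where I anticipate the real work. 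Finally, for the dichotomy to be \emph{exclusive and exhaustive} one also needs that for these orders ``Euclidean'' coincides with ``norm-Euclidean'' --- classical in dimension $3$, but a delicate extra input in the quaternionic dimensions $4$ and $5$.
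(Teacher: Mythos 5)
Your reduction to the cusp orbit ($\Gamma = \mathcal{E}$ iff $\mathcal{E}\cdot\infty = \Gamma\cdot\infty$) and your covering-radius criterion are sound and match the paper's geometric criterion, but the proposal has a genuine gap exactly at the theorem's center of gravity: infinite index. You defer it (``where I anticipate the real work''), and your Ford-domain framework for $\Gamma$ itself will not deliver it, nor even the easy direction as stated: the Ford domain of $\Gamma$ is only \emph{contained} in the region above the unit spheres, and isometric spheres of radius strictly between $\sqrt{1 - \mu(\Lambda)^2}$ and $1$ (coming from $c$ with $\nrm(c) > 1$) can protrude above the union of unit spheres even when the covering radius $\mu(\Lambda) < 1$, so your claim that ``every face-pairing of that domain is one of the triangular generators'' is unjustified. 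The paper sidesteps both problems: for $\Gamma = \mathcal{E}$ it runs a (semi-)Euclidean division algorithm on matrix entries, no Poincar\'e theorem needed; for infinite index it replaces $\mathcal{E}$ by a \emph{commensurable} group $K$ generated by the translations $T_\alpha$ and the \emph{reflections} $\phi_\alpha$ through unit spheres centered on $\Lambda$ (where $\Lambda = \OO$, or $\OO^+$ in $dim = 4$ --- note your centers should be $\OO^+$ there, not $\OO$). Because centers in $\Lambda$ are at distances $1,\sqrt{2},\sqrt{3},2$, the dihedral angles are $\pi/3$, $\pi/2$, $0$, so the $\phi_\alpha$ form a geometric reflection group, and the exterior of all unit spheres over a translation fundamental domain is an \emph{exact} convex fundamental polyhedron for $K$; its volume is finite iff the closed unit balls cover the boundary, i.e.\ iff $\mu(\Lambda) \leq 1$. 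That is what your Ford-domain heuristic cannot certify for $\Gamma$ or $\mathcal{E}$ directly.

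Your non-normality argument also fails: it needs that the parabolics at the cusps in $\Gamma\cdot\infty$ generate $\Gamma$, which is false in general --- for all but finitely many imaginary quadratic fields, nonvanishing cuspidal cohomology (Grunewald--Schwermer, Zimmert) shows the quotient of the Bianchi group by the normal subgroup generated by \emph{all} parabolic elements is infinite --- and it is unproven in the quaternionic cases; worse, you would need it precisely in the non-Euclidean regime where it is most dubious. The paper argues instead via limit sets: were $\mathcal{E}$ normal and non-elementary in the lattice $\Gamma$, its limit set would be all of $\partial\HH^{dim}$; but $\mathcal{E}$ and $K$ are commensurable, hence share a limit set, and the limit set of $K$ lies inside the union of closed unit balls centered on $\Lambda$, a proper subset when $\mu(\Lambda) > 1$. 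Finally, the exclusivity input you correctly flag (Euclidean $\Rightarrow$ norm-Euclidean for these maximal orders) is not handled abstractly in the paper either: it comes out of the finite classification, via Minkowski's second theorem, of all orders with $\mu(\Lambda) \leq 1$; the tables show every \emph{maximal} order on that list in fact has $\mu(\Lambda) < 1$, i.e.\ is norm-Euclidean, while the boundary case $\mu(\Lambda) = 1$ (e.g.\ $\ZZ[\sqrt{-3}]$) occurs only for non-maximal orders and is captured by the paper's semi-Euclidean notion, which your dichotomy between ``covering'' and ``not covering'' does not distinguish.
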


\begin{remark}
Proper definitions for $\ddagger$-rings and the groups $SL^\ddagger(2,\OO)$ will be given in Section \ref{section: basic definitions}; for now, the reader should simply think of them as higher-dimensional analogs of orders $\mathfrak{o}_K$ of imaginary quadratic rings and the Bianchi groups $SL(2,\mathfrak{o}_K)$, respectively.
\end{remark}

Our chief goal in the present paper is to prove an even stronger statement, which concerns not just maximal orders (and $\ddagger$-orders), but \emph{all} maximal orders (and $\ddagger$-orders).

\begin{theorem}\label{thm: main theorem}
Let $\Dim = 3$, $4$, or $5$. Let $\mathcal{A}$ be either an imaginary quadratic field (if $\Dim = 3$), a rational, definite quaternion algebra equipped with an orthogonal involution $\ddagger$ (if $\Dim = 4$), or a rational, definite quaternion algebra (if $\Dim = 5$). Let $\OO$ be an order (or $\ddagger$-order) of $\mathcal{A}$. Let $\Gamma$ either be $SL(2,\OO)$ if $\Dim = 3,5$ or $SL^\ddagger(2,\OO)$ if $\Dim = 4$. Let $\mathcal{E}$ be the subgroup generated by upper and lower triangular matrices. Then exactly one of the following is true.
    \begin{enumerate}
        \item $\OO$ is semi-Euclidean (respectively, semi-$\ddagger$-Euclidean, if $\Dim = 4$); $\Gamma = \mathcal{E}$.
        \item $\OO$ is not semi-Euclidean (respectively, semi-$\ddagger$-Euclidean, if $\Dim = 4)$; $\mathcal{E}$ is an infinite-index, non-normal subgroup of $\Gamma$.
    \end{enumerate}
Up to isomorphism, there is only a finite number of semi-Euclidean and semi-$\ddagger$-Euclidean orders---they are enumerated in Tables \ref{tab:semi_euclidean_dim_3}, \ref{tab:semi_euclidean_dim_4}, and \ref{tab:semi_euclidean_dim_5}.
\end{theorem}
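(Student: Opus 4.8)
The plan is to recast the problem entirely in terms of the action of $\Gamma$ on the upper half-space model of hyperbolic $dim$-space $\HH^{dim}$, whose ideal boundary is $\RR^{dim-1}\cup\{\infty\}$ sitting inside $(\mathcal{A}\otimes_\QQ\RR)\cup\{\infty\}$, and then to read off both the generation statement and the index statement from a single, carefully chosen fundamental polyhedron. First I would record two elementary facts. The involution $S=\left(\begin{smallmatrix}0&-1\\1&0\end{smallmatrix}\right)$ always lies in $\mathcal{E}$ (it is the standard product of three elementary matrices), so $\mathcal{E}=\langle\Gamma_\infty,S\rangle$, where $\Gamma_\infty$ is the stabilizer of the cusp $\infty$, i.e.\ the upper-triangular subgroup. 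Second, for $g=\left(\begin{smallmatrix}a&b\\c&d\end{smallmatrix}\right)$ with $c\neq 0$ the isometric sphere of $g$ is the hemisphere of radius $\nrm(c)^{-1/2}$ centered at $-c^{-1}d$; since $\nrm(c)\geq 1$ on $\OO\setminus\{0\}$, with equality exactly for units, the largest isometric spheres have radius $1$, and a short computation shows $\mathcal{E}$ already contains side-pairing elements realizing the unit hemispheres centered at every point of a full lattice $\Lambda$ (one has $\Lambda=\OO$ when $dim=3,5$, and $\Lambda$ the $\ddagger$-symmetric part of $\OO$ when $dim=4$).

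With these in hand I would prove a clean geometric criterion. Let $P$ be the polyhedron consisting of the points of $\HH^{dim}$ lying above all unit hemispheres centered at $\Lambda$, cut down to a fundamental domain for $\Gamma_\infty$. All of its side-pairing transformations are then elementary, so by the Poincar\'e polyhedron theorem---once the ridge-cycle and completeness conditions are checked---$P$ is a fundamental domain for the group it generates, which is exactly $\mathcal{E}$. The crux is the equivalence I would establish between the algebraic notion of semi-Euclideanness (as defined in Section \ref{section: basic definitions}) and the geometric statement that the \emph{closed} unit balls centered at $\Lambda$ cover all of $\RR^{dim-1}$, with every tangency point (where $\nrm(\,\cdot\,)=1$ is attained but not beaten) lying in the $\mathcal{E}$-orbit of $\infty$. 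Intuitively, a division step $z\mapsto z-q$ with $\nrm(z-q)<1$ is precisely a descent beneath an open unit hemisphere, and the non-strict case is what the prefix \emph{semi} is designed to absorb.

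The dichotomy then falls out of comparing covolumes. If $\OO$ is semi-Euclidean the closed unit hemispheres cover, so $P$ meets the boundary only at cusps and has finite volume; hence $P$ is simultaneously a fundamental domain for $\Gamma$ and for $\mathcal{E}$, and since $\mathcal{E}\leq\Gamma$ have equal covolume we conclude $\mathcal{E}=\Gamma$. If $\OO$ is not semi-Euclidean there is a non-cusp boundary point covered by no open unit hemisphere; then $P$ reaches the boundary over a set of positive measure, so $\operatorname{vol}(P)=\infty$ while $\Gamma$ is a lattice, forcing $[\Gamma:\mathcal{E}]=\infty$. For non-normality I would argue that $\mathcal{E}\supseteq\Gamma_\infty$ together with normality would force $\mathcal{E}$ to contain the stabilizer of every $\Gamma$-translate of $\infty$, and hence to contain elements whose isometric spheres dip strictly below radius $1$---contradicting that the side pairings of $\mathcal{E}$ are precisely the unit-radius ones. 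I expect this normality step, together with the careful verification of the Poincar\'e conditions at the tangency points in the borderline (semi- but not norm-Euclidean) cases, to be the main technical obstacle.

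Finally, for the enumeration I would convert the covering condition into a bound. Semi-Euclideanness forces the covering radius of $\Lambda$, viewed as a lattice in $(\RR^{dim-1},\nrm)$, to be at most $1$; by a standard geometry-of-numbers estimate this bounds $\disc(\OO)$, leaving only finitely many orders (respectively $\ddagger$-orders) up to isomorphism in each of $dim=3,4,5$. Checking the finitely many survivors one at a time---computing covering radii and inspecting the tangency set in each case---then both confirms which are genuinely semi-Euclidean and fills in Tables \ref{tab:semi_euclidean_dim_3}, \ref{tab:semi_euclidean_dim_4}, and \ref{tab:semi_euclidean_dim_5}.
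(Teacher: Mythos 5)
Your geometric frame---unit hemispheres centered at $\Lambda$, a Ford-type polyhedron, and a finite-versus-infinite volume dichotomy governed by the covering radius---is genuinely close in spirit to the paper's, but your load-bearing equivalence is wrong in exactly the borderline cases the theorem is about. You claim that semi-Euclideanness is equivalent to the closed unit balls covering $\RR^{dim-1}$ \emph{with every tangency point lying in the $\mathcal{E}$-orbit of $\infty$}. The tangency clause is backwards. Take $\OO = \ZZ[\sqrt{-3}]$ and the deep hole $\omega = (1+\sqrt{-3})/2$: if $\omega = a/b$ with $a\OO + b\OO = \OO$, then since $\omega$ lies in the maximal order $\OO' = \ZZ[\omega]$ we get $a \in b\OO'$, hence $\OO = a\OO + b\OO \subseteq b\OO'$, forcing $b \in \OO^\times$ and thus $\omega \in \OO$, a contradiction. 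So the tangency points are \emph{never} coprime quotients, i.e., never in the orbit of $\infty$---and this exclusion is precisely why such orders \emph{are} semi-Euclidean: coprime inputs simply cannot land on a deep hole unless $b$ is a unit. The paper's Theorem \ref{thm: semi-Euclidean classification} runs exactly this argument, order by order, via auxiliary maximal Euclidean orders $\OO'$ containing the deep holes (Tables \ref{tab:deep_holes_dim_4} and \ref{tab:deep_holes_dim_5}). Your criterion would misclassify every ``semi'' entry in Tables \ref{tab:semi_euclidean_dim_3}, \ref{tab:semi_euclidean_dim_4}, \ref{tab:semi_euclidean_dim_5} as non-semi-Euclidean. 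A second defect in the same step: the definition of semi-Euclidean allows an arbitrary stathm into an arbitrary well-ordered set, so ``a division step is a descent below a unit hemisphere'' is valid only for the norm stathm; the implication ``covering radius $> 1$ implies not semi-Euclidean'' cannot be read off the geometry directly. The paper gets it group-theoretically: semi-Euclidean implies $\mathcal{E} = \Gamma$ by Algorithm \ref{alg: semi-Euclidean algorithm}, while covering radius $> 1$ implies $[\Gamma : \mathcal{E}] = \infty$, and these two facts together close the loop.

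Two further steps fail as written. For the positive direction, even granting that $P$ is a fundamental domain for $\mathcal{E}$ (the Poincar\'e ridge verification you defer is exactly what the paper avoids by passing to the commensurable reflection-plus-translation group $K$, whose domain $\mathcal{R} \cap (\mathcal{P}\times(0,\infty))$ is legitimate by reflection-group theory and Lemma \ref{dihedral angles}, the dihedral angles being only $0$, $\pi/3$, $\pi/2$), finite volume of $P$ yields only $[\Gamma:\mathcal{E}] < \infty$; your assertion that $P$ is ``simultaneously'' a fundamental domain for $\Gamma$ is unsupported, since the Ford domain of $\Gamma$ is cut by all isometric spheres of radius $\nrm(c)^{-1/2} < 1$, which in the borderline cases protrude near the deep holes, so the region above the unit hemispheres is in general strictly larger. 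The paper never proves $\mathcal{E} = \Gamma$ geometrically at all---that direction is purely algebraic, via Algorithm \ref{alg: semi-Euclidean algorithm} applied to coprime pairs. For non-normality, your contradiction rests on ``the side pairings of $\mathcal{E}$ are precisely the unit-radius ones,'' but $\mathcal{E}$ as a group contains many elements whose isometric spheres have radius strictly less than $1$ (already products of two generators $E_\alpha E_\beta$ have lower-left entry $-\beta$, of norm greater than $1$ for non-unit $\beta$), so containing conjugates of $\Gamma_\infty$ produces no contradiction by this route. The paper argues instead with limit sets: if $\mathcal{E}$ were normal, its limit set would be $\Gamma$-invariant and hence all of $\partial\HH^{dim}$, whereas the limit set of the commensurable group $K$ is contained in the union of the closed unit balls centered at $\Lambda$, a proper subset of $\partial\HH^{dim}$ whenever the covering radius exceeds $1$.
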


\begin{table}
    \centering
    \begin{align*}
        \begin{array}{cc}
         \begin{array}{l|c|c}
              \OO & \text{maximal?} & \text{Euclid?} \\ \hline
              \ZZ[\sqrt{-1}] & \checkmark & \checkmark \\
              \ZZ[\sqrt{-2}]\vphantom{\ZZ\left[\frac{1 + \sqrt{-7}}{2}\right]} & \checkmark & \checkmark \\
              \ZZ\left[\frac{1 + \sqrt{-3}}{2}\right]\vphantom{\ZZ\left[\frac{1 + \sqrt{-7}}{2}\right]} & \checkmark & \checkmark
         \end{array} &
         \begin{array}{l|c|c}
              \OO & \text{maximal?} & \text{Euclid?} \\ \hline
              \ZZ\left[\sqrt{-3}\right] & & \\
              \ZZ\left[\frac{1 + \sqrt{-7}}{2}\right] & \checkmark & \checkmark \\
              \ZZ\left[\frac{1 + \sqrt{-11}}{2}\right] & \checkmark & \checkmark
         \end{array}
         \end{array}
    \end{align*}
    \caption{The semi-Euclidean orders in $\Dim = 3$. If the order is maximal, this is noted with a check mark; similarly, if the order is Euclidean.}
    \label{tab:semi_euclidean_dim_3}
\end{table}

\begin{table}
    \centering
    \begin{align*}
         \begin{array}{c|l|l|c|c}
              H & \substack{\left(\disc(H),\right. \\ \left.\disc(\ddagger)\right)} & \OO & \text{maximal?} & \text{Euclid?} \\ \hline
              \left(\frac{-1,-1}{\QQ}\right) & (2,-1) & \ZZ \oplus \ZZ i \oplus \ZZ j \oplus \ZZ \frac{1 + i + j + ij}{2} & \checkmark & \checkmark \\
              & & \ZZ \oplus \ZZ i \oplus \ZZ j \oplus \ZZ ij & & \checkmark \\ \hdashline
              \left(\frac{-1,-2}{\QQ}\right) & (2,-2) & \ZZ \oplus \ZZ i \oplus \ZZ \frac{1 + i + j}{2} \oplus \ZZ \frac{1 + i + ij}{2} & \checkmark & \checkmark\\
              & & \ZZ \oplus \ZZ i \oplus \ZZ j \oplus \ZZ \frac{j + ij}{2} & &\\
              & & \ZZ \oplus 3\ZZ i \oplus \ZZ \frac{1 + 3i + j}{2} \oplus \ZZ \frac{2i + j + ij}{2} & & \\
              & & \ZZ \oplus \ZZ i \oplus \ZZ j \oplus \ZZ ij & & \\
              & & \ZZ \oplus 2 \ZZ i \oplus \ZZ \frac{1 + i + j}{2} \oplus \ZZ \frac{1 + i - j + 2ij}{2} & & \\
              & & \ZZ \oplus 3\ZZ i \oplus \ZZ \frac{1 + 3i + j}{2} \oplus  \frac{j + 3ij}{2} & & \\ \hdashline
              \left(\frac{-2,-6}{\QQ}\right) & (2,-3) & \ZZ \oplus \ZZ i \oplus \ZZ \frac{i + j}{2} \oplus \ZZ \frac{2 + ij}{4} & \checkmark & \checkmark \\
              & & \ZZ \oplus \ZZ i \oplus \ZZ \frac{i + j}{2} \oplus \ZZ \frac{ij}{2} & & \checkmark \\ \hdashline
              \left(\frac{-2,-3}{\QQ}\right) & (2,-6) & \ZZ \oplus \ZZ i \oplus \ZZ \frac{1 + j}{2} \oplus \ZZ \frac{i + ij}{2} & \checkmark & \checkmark \\ \hdashline
              \left(\frac{-1,-10}{\QQ}\right) & (2,-10) & \ZZ \oplus \ZZ i \oplus \ZZ \frac{1 + i + j}{2} \oplus \ZZ \frac{1 + i + ij}{2} & \checkmark & \checkmark\\ \hdashline
              \left(\frac{-1,-3}{\QQ}\right) & (3,-3) & \ZZ \oplus \ZZ i \oplus \ZZ \frac{i + j}{2} \oplus \ZZ \frac{1 + ij}{2} & \checkmark & \checkmark \\
              & & \ZZ \oplus \ZZ i \oplus \ZZ \frac{1 + j}{2} \oplus \ZZ \frac{i + ij}{2} & \checkmark & \checkmark \\
              & & \ZZ \oplus 2\ZZ i\oplus \ZZ \frac{1 + 2i + j}{2} \oplus \ZZ \frac{i + ij}{2} & & \\
              & & \ZZ \oplus 2\ZZ i \oplus \ZZ \frac{1 + 2i + j}{2} \oplus \ZZ \frac{1 - j + 2ij}{2} & & \\ \hdashline
              \left(\frac{-1,-6}{\QQ}\right) & (3,-6) & \ZZ \oplus \ZZ i \oplus \ZZ \frac{1 + i + j}{2} \oplus \ZZ \frac{1 + i + ij}{2} & \checkmark & \checkmark \\ \hdashline
              \left(\frac{-2,-10}{\QQ}\right) & (5,-5) & \ZZ \oplus \ZZ i \oplus \ZZ \frac{2 + i + j}{4} \oplus \ZZ \frac{2 + 2i + ij}{4} & \checkmark & \checkmark \\ \hdashline
              \left(\frac{-2,-5}{\QQ}\right) & (5,-10) & \ZZ \oplus \ZZ i \oplus \ZZ \frac{1 + i + j}{2} \oplus \ZZ \frac{i + ij}{2} & \checkmark & \checkmark \\ \hdashline
              \left(\frac{-1,-7}{\QQ}\right) & (7,-7) & \ZZ \oplus \ZZ i \oplus \ZZ \frac{i + j}{2} \oplus \ZZ \frac{1 + ij}{2} & \checkmark & \checkmark \\
              & & \ZZ \oplus \ZZ i \oplus \ZZ \frac{1 + j}{2} \oplus \ZZ \frac{i + ij}{2} & \checkmark & \checkmark \\ \hdashline
              \left(\frac{-2,-26}{\QQ}\right) & (13,-13) & \ZZ \oplus \ZZ i \oplus \ZZ \frac{2 + i + j}{4} \oplus \ZZ \frac{2 + 2i + ij}{4} & \checkmark & \checkmark
         \end{array}
    \end{align*}
    \caption{The semi-$\ddagger$-Euclidean orders in $\Dim = 4$. Throughout, $\ddagger$ is chosen such that $(ij)^\ddagger = -ij$. If the order is $\ddagger$-maximal, this is noted with a check mark; similarly, if the order is $\ddagger$-Euclidean.}
    \label{tab:semi_euclidean_dim_4}
\end{table}

\begin{table}
    \centering
    \begin{align*}
         \begin{array}{c|l|l|c|c}
              H & \disc(H) & \OO & \text{maximal?} & \text{Euclid?} \\ \hline
              \left(\frac{-1,-1}{\QQ}\right) & 2 & \ZZ \oplus \ZZ i \oplus \ZZ j \oplus \ZZ \frac{1 + i + j + ij}{2} & \checkmark & \checkmark \\
              & & \ZZ \oplus \ZZ i \oplus \ZZ j \oplus \ZZ ij & & \\
              & & \ZZ \oplus 3\ZZ i \oplus \ZZ (i - j) \oplus \ZZ \frac{1 + i + j + ij}{2} & & \\ \hdashline
              \left(\frac{-3,-1}{\QQ}\right) & 3 & \ZZ \oplus \ZZ \frac{1 + i}{2} \oplus \ZZ j \oplus \ZZ \frac{j + ij}{2} & \checkmark & \checkmark \\
              & & \ZZ \oplus \ZZ i \oplus \ZZ j \oplus \ZZ \frac{1 + i + j + ij}{2} & & \\ \hdashline
              \left(\frac{-2,-5}{\QQ}\right) & 5 & \ZZ \oplus \ZZ i \oplus \ZZ \frac{1 + i + j}{2} \oplus \ZZ \frac{2 + i + ij}{4} & \checkmark & \checkmark
         \end{array}
    \end{align*}
    \caption{The semi-Euclidean orders in $\Dim = 5$. If the order is maximal, this is noted with a check mark; similarly, if the order is Euclidean.}
    \label{tab:semi_euclidean_dim_5}
\end{table}

\begin{remark}
The term ``semi-Euclidean" will be properly defined in Section \ref{section: semi-Euclidean rings}, but the reader should think of it as a slight relaxation of a Euclidean ring such that one can run the Euclidean algorithm on coprime pairs. For the maximal orders and maximal $\ddagger$-orders that we consider, the two notions coincide.
\end{remark}

As we shall see, the proof of this statement for orders other than the ones enumerated in Tables \ref{tab:semi_euclidean_dim_3}, \ref{tab:semi_euclidean_dim_4}, and \ref{tab:semi_euclidean_dim_5} is short and deeply geometric. For the case $\Dim = 3$, Theorem \ref{thm: main theorem} is not new, although the proof is. (However, it is important to note that there was a similar proof of this result in Daniel Martin's 2020 thesis\cite{Martin_2020}, which the author only learned of after the present paper was written.) However, for the cases $\Dim = 4$ and $\Dim = 5$, Theorem \ref{thm: main theorem} is an entirely new result, as we shall discuss below.

This result is a part of a phenomenon that Bogdan Nica termed ``unreasonable slightness,"\cite{Nica_2011} for reasons that will shortly become apparent. Let $K$ be an algebraic number field, $\mathfrak{o}_K$ its ring of integers, $SL(n,\mathfrak{o}_K)$ the special linear group on $\mathfrak{o}_K^n$, and $E(n,\mathfrak{o}_K)$ the subgroup generated by unipotent upper and lower triangular matrices---what are usually called ``elementary" matrices. Then
    \begin{itemize}
        \item if $n > 2$, $SL(n,\mathfrak{o}_K) = E(n,\mathfrak{o}_K)$\cite{Bass_Milnor_Serre_1967},
        \item if $K$ is not an imaginary quadratic field, $SL(n,\mathfrak{o}_K) = E(n, \mathfrak{o}_K)$\cite{Vaserstein_1972},
        \item if $K$ is an imaginary quadratic field, $SL(2,\mathfrak{o}_K) = E(2,\mathfrak{o}_K)$ if and only if $\mathfrak{o}_K$ is a norm-Euclidean ring\cite{Cohn_1966}. Furthermore, if $\mathfrak{o}_K$ is not Euclidean, then $E(2,\mathfrak{o}_K)$ is an infinite-index, non-normal subgroup\cite{Nica_2011,Sheydvasser_2016}.
    \end{itemize}
In short, either the group $SL(n,\mathfrak{o}_K)$ is generated by its elementary matrices, or the group generated by elementary matrices is instead a very meager subgroup. It is worth noting that $E(n,\mathfrak{o}_K)$ can also be characterized as the group generated by all upper and lower triangular matrices, and not simply the unipotent ones; or, if one prefers, it is the subgroup generated by all upper and lower unipotent triangular matrices, as well as all the diagonal ones. Henceforth, we shall take the somewhat nonstandard convention of taking the term ``elementary matrix" to include all upper and lower triangular matrices.

In any case, there are many other results of a similar nature for when $SL(n,R)$---where $R$ is some ring---is generated by upper and lower triangular matrices, and by how many; see, for example, the work of Suslin, Carter and Keller, and Nica\cite{Suslin_1977, Carter_Keller_1983, Nica_2018}. For the specific case where $\OO$ is an order of a quaternion algebra, it is known that if $\OO$ is an $S$-order of a non-split quaternion algebra over an algebraic number field, where $S$ contains at least one non-archimedean valuation, then any element in $SL(n,\OO)$ can be written as a product of at most $2n^2 + 2n - 12$ upper and lower triangular matrices.\cite[Theorem 2.3]{Heald_2007}

The proof for Theorem \ref{thm: main theorem} exploits the fact that the group $\Gamma$ can be understood as an arithmetic subgroup of $\Isom(\HH^{\Dim})$, the isometry group of hyperbolic $\Dim$-space. It proceeds by looking at fundamental domains for the group generated by upper and lower triangular matrices---or, to be a little more accurate, a group commensurable to this one. This point of view is very old, particularly for the Bianchi groups $SL(2,\mathfrak{o}_K)$. However, even for the groups $SL(2,\OO)$, where $\OO$ is an order of a quaternion order, there are other results along these lines: for example, it is known that the number of cusps of the fundamental domain of $SL(2,\OO)$ matches the class number of $\OO$. \cite{Koch_2017}

The simplicity and universality of the proof of Theorem \ref{thm: main theorem} suggest that it might be just a special case of some broader result, which we discuss along with a few other open questions in Section \ref{section: open questions}.

\section{Basic Definitions:}\label{section: basic definitions}

We start by defining all of the terms in the statement of Theorem \ref{thm: main theorem} save for semi-Euclidean rings; most of this discussion is adapted from a paper by Matthew Litman and the current author\cite{Litman_Sheydvasser_2021}, but we repeat it here for convenience. Throughout, we use the standard notation that
    \begin{align*}
        H = \left(\frac{a,b}{\mathbb{F}}\right)
    \end{align*}
is the quaternion algebra over a field $\mathbb{F}$ generated by elements $i,j$ with relations $i^2 = a \in \mathbb{F}^\times$, $j^2 = b \in \mathbb{F}^\times$, and $ij = -ji$; we assume that the characteristic of $\mathbb{F}$ is $0$. In fact, for our purposes, we will always either take $\mathbb{F} = \QQ$, in which case we say that it is rational, or $\mathbb{F} = \RR$. Such a quaternion algebra is called definite if additionally $a < 0$ and $b < 0$, or equivalently if it is a sub-algebra of the classical Hamilton quaternions $H_\RR$.

For any quaternion algebra $H$ over a field $\mathbb{F}$, we can define $SL(n,H)$ in one of several equivalent ways. It is, for instance, the subgroup of $GL(n,H)$ with Dieudonn\'{e} determinant $1$. Perhaps an easier way to think about it is this: take $\overline{\mathbb{F}}$ to be the algebraic closure of $\mathbb{F}$. Then $H' = H \otimes_{\mathbb{F}} \overline{\mathbb{F}} \cong \Mat(2,\overline{\mathbb{F}})$, the ring of $2 \times 2$ matrices over $\overline{\mathbb{F}}$. Therefore, $GL(n,H) \subset GL(n,H') \cong GL(2n,\overline{\mathbb{F}})$. Then $SL(n,H) = GL(n,H) \cap SL(2n,\overline{\mathbb{F}})$, where we have identified $GL(n,H')$ and $GL(2n,\overline{\mathbb{F}})$ by abuse of notation.

Given a ring $R$, an \emph{involution} on $R$ is a group homomorphism $\sigma: R \rightarrow R$ such that $\sigma^2 = id$ and $\sigma(xy) = \sigma(y)\sigma(x)$ for all $x,y \in R$. A homomorphism of rings with involutions $\varphi:(R,\sigma) \rightarrow (S,\sigma')$ is a ring homomorphism such that $\varphi \circ \sigma = \sigma' \circ \varphi$. We shall write $R^+$ to denote the subset of $R$ consisting of elements $\alpha \in R$ such that $\sigma(\alpha) = \alpha$. Rational quaternion algebras admit exactly two kinds of involutions: the standard involution
    \begin{align*}
        \overline{x + yi + zj + tij} = x - yi - zj -tij,
    \end{align*}
and the orthogonal involutions which act as $id$ on a subspace of dimension $3$ and as $-id$ on the orthogonal subspace of dimension $1$. Given an orthogonal involution $\ddagger$ on a quaternion algebra $H$, one can always choose a basis $1,i,j,ij$ for $H$ such that one can express this orthogonal involution as
    \begin{align*}
        \left(x + yi + zj + tij\right)^\ddagger = x + yi + zj - tij,
    \end{align*}
which is the convention that we shall take henceforth.

Any ring with involution $(R,\sigma)$ can be extended to $(\Mat(2,R),\hat{\sigma})$, where
    \begin{align*}
        \hat{\sigma}\left(\begin{pmatrix} a & b \\ c & d \end{pmatrix}\right) = \begin{pmatrix} \sigma(d) & -\sigma(b) \\ -\sigma(c) & \sigma(a) \end{pmatrix},
    \end{align*}
which lets us define the twisted general and special linear groups\cite{Sheydvasser_2019}
    \begin{align*}
        GL^\sigma(2,R) &= \left\{M \in \Mat(2,R)\middle| \hat{\sigma}(M)M = M\hat{\sigma}(M) \in R^\times\right\} \\
        SL^\sigma(2,R) &= \left\{M \in \Mat(2,R)\middle| \hat{\sigma}(M)M = M\hat{\sigma}(M) = 1\right\}.
    \end{align*}
In particular, it is easy to check that $SL^\ddagger(2,H)$ is a subgroup of $SL(2,H)$.

An \emph{order} $\OO$ of a central simple algebra $A$ over a field $\mathbb{F}$ is a (full) lattice that is also a sub-ring. We say that such an order is \emph{Euclidean} if there exists a function---called the \emph{stathm}---$\Phi:R \rightarrow W$ to some well-ordered set $W$ such that for all $a,b \in \OO$ with $b \neq 0$, there exists $q \in \OO$ such that $\Phi(a - bq) < \Phi(b)$. In this case, the usual Euclidean algorithm can be applied to $\OO$; a simple corollary to this is that if $\mathcal{E}$ is the subgroup of $SL(2,\OO)$ generated by upper and lower triangular matrices, then $SL(2,\OO) = \mathcal{E}$. If $\mathbb{F}$ is a number field, we say that $\OO$ is \emph{norm-Euclidean} if one can choose the stathm to be the absolute norm $\OO \rightarrow \ZZ_{\geq 0}$. As an aside, all Euclidean rings that we consider shall be norm-Euclidean, which is somewhat surprising, as there are many Euclidean rings that are not norm-Euclidean: for instance, $\ZZ[\sqrt{14}]$ is such an example\cite{harper_2004}.

A $\sigma$-\emph{order} of a central simple algebra $A$, equipped with an involution $\sigma$, over a field $\mathbb{F}$, is an order $\OO$ which is also a subring with involution---that is, $\sigma(\OO) = \OO$. Such an order is called $\sigma$-\emph{Euclidean}\cite{Sheydvasser_2021} if there exists a function---also referred to as the \emph{stathm}---$\Phi: R\rightarrow W$ to some well-ordered set $W$ such that for all $a,b \in \OO$ with $b \neq 0$ and $a\sigma(b) \in \OO^+$, there exists some $q \in R^+$ such that $\Phi(a - bq) < \Phi(b)$. There is a corresponding $\sigma$-Euclidean algorithm, which has many of the nice properties as the usual Euclidean algorithm; for example, if $(\OO,\sigma)$ is $\sigma$-Euclidean, then $SL^\sigma(2,\OO) = \mathcal{E}$, where $\mathcal{E}$ is the subgroup of $SL^\sigma(2,\OO)$ generated by upper and lower triangular matrices\cite{Sheydvasser_2021}. As above, we say that $\sigma$-order $\OO$ over a number field is \emph{norm $\sigma$-Euclidean} if we can take the stathm to be the restriction of the absolute norm function to $\OO^+$. At the present time, it is an open problem to construct an example of a $\sigma$-Euclidean order that is not norm $\sigma$-Euclidean\cite{Sheydvasser_2021}.

That we must restrict to pairs $a,b$ with $a\sigma(b) \in \OO^+$ might look mysterious, but it is motivated by the observation that
    \begin{align*}
        SL^\sigma(2,R) &= \left\{\begin{pmatrix} a & b \\ c & d \end{pmatrix} \in \Mat(2,R)\middle| a\sigma(d) - b\sigma(c) = 1, \, a\sigma(b) \in R^+, \, c\sigma(d) \in R^+\right\},
    \end{align*}
if $R$ is a subring of a central simple algebra, which is easily checked by computation. Note that if $(a,b)$ is a row of a matrix in $GL^\sigma(2,R)$, then $a\sigma(b) \in R^+$.

All of the above has an interpretation in terms of hyperbolic geometry. Let $\HH^{\Dim}$ denote $\Dim$-dimensional hyperbolic space, represented as the upper half-space of $\RR^{\Dim}$; the boundary $\partial \HH^{\Dim}$ can then be identified with $\RR^{\Dim - 1} \cup \{\infty\}$. We shall write $\Isom(\HH^{\Dim})$ to mean the full isometry group of $\HH^{\Dim}$. It is well-known that $\Isom(\HH^{\Dim})$ is isomorphic to $\Mob(\Dim - 1)$, the group of transformations of $\partial \HH^{\Dim}$ generated by reflections through $(\Dim - 2)$-spheres; correspondingly, the orientation-preserving subgroups are also isomorphic---i.e. $\Isom^0(\HH^{\Dim}) \cong \Mob^0(\Dim - 1)$. This isomorphism is easy to describe: any element in $\Mob(\Dim - 1)$ has a unique extension to an isometry of hyperbolic space, and conversely, any isometry of hyperbolic space yields a unique action on the boundary. Where this relates to our previous algebraic definitions is if $\Dim$ is small. In that case, we have the following accidental isomorphisms:

    \begin{itemize}
        \item $\Mob^0(2) \cong PSL(2,\CC)$ via identifying $\RR^2$ with $\CC$,
        \item $\Mob^0(3) \cong PSL^\ddagger(2,H_\RR)$ via identifying $\RR^3$ with $H_\RR^+$,
        \item $\Mob^0(4) \cong PSL(2,H_\RR)$ via identifying $\RR^4$ with $H_\RR$.
    \end{itemize}
In each case, the action on $\partial \HH^{\Dim}$ is just
    \begin{align*}
        \begin{pmatrix} a & b \\ c & d \end{pmatrix}.\rho = (a\rho + b)(c\rho + d)^{-1}.
    \end{align*}
If we take $\OO$ to be either an order (if $\Dim = 3,5$) or a $\ddagger$-order (if $\Dim = 4$), and either $\Gamma = SL(2,\OO)$ (if $\Dim = 3,5$) or $SL^\ddagger(2,\OO)$ (if $\Dim = 4$), then $\Gamma/\{\pm 1\}$ is discrete subgroup of $\Mob^0(\Dim - 1)$, which is to say that $\Gamma$ is a Kleinian group. However, more than that, $\Gamma$ is an arithmetic group, since it is the set of integer points of an algebraic group. It is well-known that arithmetic groups are lattices which, in this setting, means the following: if one constructs a closed, connected fundamental domain $\mathcal{F}$ for the action of $\Gamma/\{\pm 1\}$ on $\HH^{\Dim}$, then the hyperbolic volume of $\mathcal{F}$ must be finite.

\section{Semi-Euclidean Rings:}\label{section: semi-Euclidean rings}

In the previous section, we defined Euclidean and $\sigma$-Euclidean rings. We now give a simple relaxation of those concepts.

\begin{definition}
Let $R$ be a subring of a division ring. We say that it is \emph{semi-Euclidean} if there a well-ordered set $W$, and a function $\Phi:R \rightarrow W$ called the \emph{stathm} such that for any $a,b \in R$ with $b \neq 0$, if $aR + bR = R$, then there exist $q,r \in R$ such that $a = bq + r$ and $\Phi(r) < \Phi(b)$.
\end{definition}

\begin{definition}
Let $R$ be a subring of a division ring and equipped with an involution $\sigma$. We say that it is \emph{semi-$\sigma$-Euclidean} if there a well-ordered set $W$, and a function $\Phi:R \rightarrow W$ called the \emph{stathm} such that for any $a,b \in R$ with $b \neq 0$, if $aR + bR = R$ and $a\sigma(b) \in R^+$, then there exist $q,r \in R$ such that $a = bq + r$, $q \in R^+$, and $\Phi(r) < \Phi(b)$.
\end{definition}

\begin{remark}
In both cases, since $r \in aR + bR$ and $a \in rR + bR$, it follows that $rR + bR = aR + bR = R$.
\end{remark}

\begin{remark}
To the best of the author's knowledge, these definitions of semi-Euclidean and semi-$\sigma$-Euclidean rings are new. The closest analog might be Campoli's almost Euclidean domains \cite{Campoli_1988}. However, since a ring is an almost Euclidean ring if and only if it is a PID \cite{Green_1997}, the notion of an almost Euclidean ring is strictly stronger.
\end{remark}

The idea behind this relaxation is that if a ring is semi-Euclidean (or semi-$\sigma$-Euclidean), one can still define the Euclidean algorithm as normal, but the inputs have to be restricted to co-prime pairs. This is still enough to prove that $SL(2,R)$ (or, respectively, $SL^\sigma(2,R)$) is generated by elementary matrices, however. For convenience, we will define $\Div(a,b)$ as a function returning the required pair $q,r$.

\begin{alg}\label{alg: semi-Euclidean algorithm}
On an input of $\gamma \in GL(2,R)$ (or $GL^\sigma(2,R)$), this algorithm returns a finite sequence $\gamma_1,\gamma_2,\ldots,\gamma_n$ of upper and lower triangular matrices such that their product is $\gamma$.
\begin{algorithm}[H]
\begin{algorithmic}[1]
\Procedure{SemiEuclideanAlg}{$\gamma$}
\State $l_f \gets []$ \Comment $l_f$ is the list of outputted matrices
\State $\left(\begin{smallmatrix} a & b \\ c & d \end{smallmatrix}\right) \gets \gamma$
\While{$c \neq 0$}
    \If{$\Phi(c) < \Phi(d)$}
        \State $\gamma \gets \gamma\left(\begin{smallmatrix} 0 & 1 \\ 1 & 0 \end{smallmatrix}\right)$
        \State Append $\left(\begin{smallmatrix} 1 & 1 \\ 0 & 1 \end{smallmatrix}\right)$,$\left(\begin{smallmatrix} 1 & 0 \\ -1 & 1 \end{smallmatrix}\right)$,$\left(\begin{smallmatrix} -1 & 1 \\ 0 & 1 \end{smallmatrix}\right)$ to $l_f$
        \State $\left(\begin{smallmatrix} a & b \\ c & d \end{smallmatrix}\right) \gets \gamma$
        \If{$c = 0$}
            \State Exit while-loop
        \EndIf
    \EndIf
    \State $(q,r) \gets \Div(c,d)$
    \State $\gamma \gets \gamma\left(\begin{smallmatrix} 1 & 0 \\ -q & 1 \end{smallmatrix}\right)$
    \State Append $\left(\begin{smallmatrix} 1 & 0 \\ q & 1 \end{smallmatrix}\right)$ to $l_f$
    \State $\left(\begin{smallmatrix} a & b \\ c & d \end{smallmatrix}\right) \gets \gamma$
\EndWhile
\State Prepend $\gamma$ to $l_f$
\State \textbf{return} $l_f$
\EndProcedure
\end{algorithmic}
\end{algorithm}
\end{alg}

\begin{remark}
We note that if either $R$ itself or $\Div$ are not computable, then this is only a semi-algorithm.
\end{remark}

\begin{proof}[Proof of Correctness]
The algorithm is a barely modified version of the standard Euclidean algorithm. It finds upper and lower triangular matrices (in $GL(2,R)$ or $GL^\sigma(2,R)$, as appropriate) to multiply $\gamma$ by on the right, and then appends their inverses to the list $l_f$; once $\gamma$ is upper triangular, it is added to the list, which is then returned. Computing the required triangular matrices works by applying $\Div$ to the lower row $(c,d)$ of a matrix in either $GL(2,R)$ or $GL^\sigma(2,R)$; since it is the lower row, we automatically know that $c R + d R = R$ and furthermore that $c\sigma(d) \in R^+$ if the matrix is in $GL^\sigma(2,R)$, which justifies the application of $\Div$.

It remains to show that this procedure eventually halts, which is to say that eventually, the lower left entry of $\gamma$ is $0$. At each step, if this entry is not $0$, first the lower two entries are switched if necessary by multiplying
    \begin{align*}
        \begin{pmatrix} a & b \\ c & d \end{pmatrix}\begin{pmatrix} 0 & 1 \\ 1 & 0 \end{pmatrix} = \begin{pmatrix} b & a \\ d & c \end{pmatrix},
    \end{align*}
so we may assume that $\Phi(c) \geq \Phi(d)$. Given $(q,r) = \Div(c,d)$,
    \begin{align*}
        \begin{pmatrix} a & b \\ c & d \end{pmatrix}\begin{pmatrix} 1 & 0 \\ -q & 0 \end{pmatrix} &= \begin{pmatrix} * & * \\ c - dq & d \end{pmatrix} = \begin{pmatrix} * & * \\ r & d \end{pmatrix},
    \end{align*}
hence $\Phi(r) < \Phi(d) \leq \Phi(c)$. Thus, the sequence of $\Phi(c_i)$ where $c_i$ is the lower-left entry on the $i$-th step must be strictly decreasing. Since $W$ is well-ordered, this sequence must be finite.
\end{proof}

Although this algorithm only provides a decomposition of lower and upper triangular matrices in $GL(2,R)$ (or $GL^\sigma(2,R)$), it's not hard to see that one can obtain a decomposition inside of $SL(2,R)$ (or $SL^\sigma(2,R)$) simply by conjugating by
    \begin{align*}
        \begin{pmatrix} 1 & 0 \\ 0 & -1 \end{pmatrix}
    \end{align*}
where necessary. This allows us to prove the following.

\begin{theorem}\label{thm: semi-Euclidean classification}
Suppose that closed unit balls centered on $\OO$ (or $\OO^+$, if $\Dim = 4$) cover $\mathcal{A} \otimes_\QQ \RR$ (or $\mathcal{A}^+ \otimes_\QQ \RR$, if $\Dim = 4$). Then $\OO$ is semi-Euclidean and so $\Gamma = \mathcal{E}$. Furthermore, up to isomorphism there are only finitely many such orders, and they are enumerated in Tables \ref{tab:semi_euclidean_dim_3}, \ref{tab:semi_euclidean_dim_4}, and \ref{tab:semi_euclidean_dim_5}.
\end{theorem}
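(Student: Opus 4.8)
The plan is to take the reduced norm $\nrm$ as the stathm and to read the semi-Euclidean division condition off the covering hypothesis. Fix $a,b$ in $\OO$ (or in $\OO^+$, if $dim = 4$) with $b \neq 0$ and $a\OO + b\OO = \OO$ (together with $ab^\ddagger \in \OO^+$ when $dim = 4$). Since $b$ is invertible in $\mathcal{A}\otimes_\QQ\RR$ (the algebra is definite, so $\nrm(b) \neq 0$) and $\nrm$ is multiplicative, the identity $a - bq = b(b^{-1}a - q)$ gives $\nrm(a - bq) = \nrm(b)\,\nrm(b^{-1}a - q)$ for every candidate $q$. Under the identification of $\mathcal{A}\otimes_\QQ\RR$ with $\RR^{dim-1}$ fixed in Section~\ref{section: basic definitions}, $\nrm$ is the square of the Euclidean norm, so $\nrm(a - bq) < \nrm(b)$ holds exactly when the point $\xi := b^{-1}a$ lies in the \emph{open} unit ball centered at the lattice point $q$. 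Thus a valid division with remainder is precisely a covering of $\xi$ by an open unit ball centered on $\OO$, and taking $\Phi = \nrm$ (valued in the well-ordered set $\ZZ_{\geq 0}$) realizes $\OO$ as semi-Euclidean. When $dim = 4$ one checks that the $\ddagger$-hypotheses place $\xi$ in $\mathcal{A}^+\otimes_\QQ\RR$ and permit the choice $q \in \OO^+$, so the same reduction applies to the lattice $\OO^+$.

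The hypothesis, however, only supplies a covering by \emph{closed} unit balls, hence a priori only a $q$ with $\nrm(a - bq) \leq \nrm(b)$; upgrading this to the strict inequality demanded by the stathm is the crux, and it is the one place where coprimality must be used. If strict inequality failed for every $q$, then $\xi$ would lie at distance exactly $1$ from the lattice, i.e.\ $\xi$ would be a deep hole realizing the covering radius (which the hypothesis forces to equal $1$). The key claim is that a deep hole cannot be written as $b^{-1}a$ with $a,b$ coprime: a rational deep hole $h \notin \OO$ carries a proper denominator ideal, so some $n \geq 2$ has $nh \in \OO$, and the resulting relation $na = (nh)b$ exhibits a nontrivial common divisor of $a$ and $b$, contradicting $a\OO + b\OO = \OO$. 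I expect this strictness step to be the main obstacle: it is the sole genuinely arithmetic (rather than purely geometric) ingredient, and making the denominator/deep-hole argument uniform across the commutative case and the noncommutative quaternion orders is exactly where care is needed.

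Granting semi-Euclideanness, the conclusion $\Gamma = \mathcal{E}$ is immediate from Algorithm~\ref{alg: semi-Euclidean algorithm}: for $M \in \Gamma$ the lower entries $c,d$ generate the unit right-ideal and so are coprime, the inputs on which $\mathrm{Div}$ is guaranteed to terminate, and the algorithm writes $M$ as a product of elementary matrices after the indicated conjugation into the special linear group.

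Finally, finiteness and the explicit tables follow from a volume estimate. Covering by closed unit balls forces the covering radius of $\OO$ (resp.\ $\OO^+$) to be at most $1$, and since a fundamental domain may be taken inside a single ball of that radius, the covolume is bounded above by the volume $V_{dim-1}$ of the unit ball in $\RR^{dim-1}$. As the covolume is a fixed positive power of $\sqrt{|\disc\OO|}$ (resp.\ of $\discrd$ and $\disc(\ddagger)$), this bounds the discriminant, and there are only finitely many orders (or $\ddagger$-orders) of bounded discriminant up to isomorphism. Enumerating these finitely many candidates and computing the covering radius of each—an explicit finite calculation I would carry out by reducing every lattice to a normal form and locating its deep holes—leaves exactly the orders recorded in Tables~\ref{tab:semi_euclidean_dim_3}, \ref{tab:semi_euclidean_dim_4}, and~\ref{tab:semi_euclidean_dim_5}.
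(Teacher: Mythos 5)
Your overall plan---take $\Phi = \nrm$, translate the division condition into covering of $\xi = b^{-1}a$ by unit balls, and isolate the distance-exactly-$1$ case as the crux---is the same as the paper's, and you correctly identify where the real work lies. But your resolution of that crux is a non sequitur. From $h = b^{-1}a \notin \OO$ and $nh \in \OO$ you obtain $na = b(nh)$, and this relation does \emph{not} exhibit a common divisor of $a$ and $b$: combining it with a coprimality witness $ax + by = 1$ only yields $b(nh\,x + ny) = n$, i.e.\ $n \in b\OO$, which is perfectly compatible with $a\OO + b\OO = \OO$. Note that your argument nowhere uses that $h$ is at distance exactly $1$ from the lattice, only that $h \notin \OO$; if it were valid it would show that $b^{-1}a \in \OO$ for \emph{every} coprime pair, which is absurd already in $\ZZ$ (take $a = 1$, $b = 2$, $h = 1/2$, $n = 2$) and in $\ZZ[\sqrt{-3}]$ (take $a = \sqrt{-3}$, $b = 2$, which are coprime since $3 \in a\OO$ and $4 \in b\OO$, yet $h = \sqrt{-3}/2 \notin \OO$). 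Whether a coprime pair can realize a deep hole is a genuinely arithmetic, order-specific question, and the paper settles it by explicit computation: for each of the finitely many candidate orders with covering radius exactly $1$, it lists all classes of deep holes $\alpha$ and verifies that each lies in a maximal (Euclidean, resp.\ $\ddagger$-Euclidean) order $\OO' \supset \OO$ (Tables \ref{tab:deep_holes_dim_5} and \ref{tab:deep_holes_dim_4}). Then $b^{-1}a \in \alpha + \OO$ gives $a \in b\OO'$, coprimality forces $\OO = a\OO + b\OO \subset b\OO'$, hence $b^{-1} \in \OO'$ and $\nrm(b) = 1$, so $b$ is a unit and division succeeds trivially with $q = b^{-1}a$, $r = 0$. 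Without these case-by-case containments your strictness step is unproven; no uniform denominator argument can replace them.

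There is also a secondary gap in your finiteness argument when $dim = 4$. Your Voronoi-cell observation (covering radius $\leq 1$ implies the Dirichlet cell sits inside a unit ball, bounding the covolume by the volume of the unit ball) is correct and in fact sharper than the paper's appeal to Minkowski's second theorem, and it suffices for $dim = 3, 5$. But for $dim = 4$ the hypothesis controls only the rank-$3$ lattice $\OO^+ \subset \RR^3$, and bounding its covolume does not by itself bound the discriminant of the rank-$4$ order $\OO$, which is what the enumeration requires. The paper bridges this by noting that $\lambda_3(\OO^+) \leq 2$ and that the product of two short linearly independent vectors of $\OO^+$ yields a fourth short vector of $\OO$ independent of them, giving $\lambda_4(\OO) \leq 4$ and hence a bound on the covolume of $\OO$; it then uses the identity expressing the covolume of a maximal $\ddagger$-order as $\lcm\left(|\disc(H)|,\nrm(\xi)\right)/4$ to bound both the discriminant of $H$ and the invariant of $\ddagger$ separately, which is what makes the list of candidate pairs $(H,\ddagger)$ finite. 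Your phrase ``the covolume is a fixed positive power of $\sqrt{|\disc \OO|}$'' glosses precisely this step.
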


\begin{proof}
Write $\Lambda$ for either $\OO$ (if $\Dim = 3,5$) or $\OO^+$ (if $\Dim = 4$). The condition that closed unit balls centered on $\Lambda$ cover the ambient space is just saying that the covering radius $\mu(\Lambda)$ of the lattice $\Lambda$ is no more than $1$. If it is less than $1$, then $\OO$ is Euclidean (or $\ddagger$-Euclidean) simply by taking the stathm $\Phi$ to be the norm. All such orders have previously been enumerated \cite{Vigneras_1980,Sheydvasser_2021}, so we are primarily interested in the case where $\mu(\Lambda) = 1$. In $\Dim = 3$, there is only one such order, namely $\ZZ[\sqrt{-3}]$. This order is semi-Euclidean if we take the stathm $\Phi$ to be the norm; this is easy to see directly, although it also can be seen as a consequence of a theorem of R. Keith Dennis\cite{Dennis_1975}.

For the other two cases, we make use of the theory of successive minima. Recall that given a lattice $\Lambda$, its $i$-th successive minimum is the infimum of all $r > 0$ such that $\Lambda$ contains $i$ linearly independent vectors of length no more than $r$. It is easy to check that $\mu(\Lambda) \geq \lambda_{\Dim - 1}(\Lambda)/2$, hence $\lambda_{\Dim - 1}(\Lambda) \leq 2$. By Minkowski's second theorem,
    \begin{align*}
        \lambda_1(\Lambda)\ldots\lambda_{\Dim - 1}(\Lambda)\text{vol}(B_1) \geq \frac{2^{\Dim - 1}}{(\Dim - 1)!}\text{vol}\left(\RR^{\Dim - 1}/\Gamma\right),
    \end{align*}
where $B_1$ is the unit ball in $\RR^{\Dim - 1}$---this has volume $4\pi/3$ if $\Dim = 4$ and $\pi^2/2$ if $\Dim = 5$. Note that $1 = \lambda_1(\Lambda) \leq \ldots \leq \lambda_{\Dim - 1}(\Lambda)$, so we get a bound on the co-volume of $\Gamma$:
    \begin{align*}
        \text{vol}\left(\RR^{\Dim - 1}/\Gamma\right) &\leq \begin{cases} 4\pi & \text{if $\Dim = 4$} \\ 6\pi^2 & \text{if $\Dim = 5$} \end{cases} < \begin{cases} 13 & \text{if $\Dim = 4$} \\ 60 & \text{if $\Dim = 5$} \end{cases}.
    \end{align*}
If $\Dim = 5$, the co-volume of $\Gamma$ is $[\OO':\Gamma]\text{vol}\left(\RR^4/\OO'\right)$ where $\OO'$ is a maximal order containing $\OO$. The co-volume of a maximal order of a rational, definite quaternion algebra $H$ is just twice the discriminant of $H$. Note that $\mu(\OO') \leq \mu(\OO)$, so we can start by looking at all isomorphism classes of maximal orders of rational, definite quaternion algebras with discriminant less than $120$---there are only finitely many of these, and they can be easily enumerated---and then enumerating all sub-orders with index no more than $60/\sqrt{|\disc(H)|}$, and with a basis of elements with norm no more than $2$ (since $\lambda_{\Dim - 1}(\Lambda) \leq 2$). For each of these, we can compute the packing radius and check whether it is greater than $1$ or not. In principle, this can be done by hand, but it is vastly easier to use, say, Magma to do it automatically, producing Table \ref{tab:semi_euclidean_dim_5}. Three of the orders are maximal and Euclidean; the other three are non-maximal and have covering radius $1$. For each of these orders, we can compute elements $\alpha$ such that:
    \begin{enumerate}
        \item the norm of $\alpha$ is $1$,
        \item $\alpha$ is distance $1$ away from $\OO$, and
        \item if $\beta$ is distance $1$ away from $\OO$, then it is a translate of one of the $\alpha$'s by an element in $\OO$.
    \end{enumerate}
These elements are listed in Table \ref{tab:deep_holes_dim_5}, together with maximal orders $\OO'$ such that $\alpha \in \OO' \supset \OO$. We shall refer to these elements as \emph{holes}---they are (up to translation) the unique elements not covered by unit disks centered at points in $\OO$. As such, they are the possible obstructions to the rings $\OO$ being semi-Euclidean with respect to the norm.

Now, choose any $a,b \in \OO$ such that $a\OO + b\OO = \OO$ and $b \neq 0$, and determine the distance from $b^{-1}a$ to $\OO$. Since the covering radius is $1$, this distance is no more than $1$. If it is exactly $1$, then $b^{-1}a = \alpha + \OO$ where $\alpha$ is a hole. This shows that $a \in b(\alpha + \OO) \subset b\OO'$, showing that $\OO = a\OO + b\OO \subset b\OO'$. This can occur only if $\nrm(b) = 1$, in which case we write $a = bq + r$ with $q = b^{-1}a$ and $r = 0$. Since $|r| = 0 < |b| = 1$, we are done. If the distance from $b^{-1}a$ to $\OO$ is less than $1$, then the argument is even easier: take $q$ to be the closest element to $b^{-1}a$ and write $a = bq + r$ where $r = a - bq$. Since $|b^{-1}a - q| < 1$, we know that $|r| < |b|$. Therefore, we have proved that all of these rings are semi-Euclidean. The ones with covering radius equal to $1$ are not Euclidean since it is known that an order $\OO$ that is Euclidean must be maximal\cite{Cerri_Chabert_Pierre_2013}.

\begin{table}
    \centering
    \begin{align*}
         \begin{array}{c|l|l|l}
            H & \OO & \alpha & \OO' \\ \hline
              \left(\frac{-1,-1}{\QQ}\right) & \ZZ \oplus \ZZ i \oplus \ZZ j \oplus \ZZ ij & \frac{1 + i + j + k}{2} & \ZZ \oplus \ZZ i \oplus \ZZ j \oplus \ZZ \frac{1 + i  + j + ij}{2} \\ \hdashline
              & \ZZ \oplus 3\ZZ i \oplus \ZZ (i - j) \oplus \ZZ \frac{1 + i + j + ij}{2} & -ij & \ZZ \oplus \ZZ i \oplus \ZZ j \oplus \ZZ \frac{1 + i + j + ij}{2} \\
              & & \frac{1 + i -j - ij}{2} & \ZZ \oplus \ZZ i \oplus \ZZ j \oplus \ZZ \frac{1 + i + j + ij}{2} \\
              & & \frac{4i + j + ij}{3} & \ZZ \oplus 3\ZZ i \oplus \ZZ (i - j) \oplus \ZZ \frac{3 - 5i + j + ij}{6} \\
              & & \frac{2i + 2j - ij}{3} & \ZZ \oplus 3\ZZ i \oplus \ZZ (i - j) \oplus \ZZ \frac{3 + i + j - 5ij}{6} \\ \hdashline
              \left(\frac{-3,-1}{\QQ}\right) & \ZZ \oplus \ZZ i \oplus \ZZ j \oplus \ZZ \frac{1 + i + j + ij}{2} & \frac{1 + ij}{2} & \ZZ \oplus \ZZ i \oplus \ZZ \frac{i + j}{2} \oplus \ZZ \frac{1 + ij}{2} \\
              & & \frac{j + ij}{2} & \ZZ \oplus \ZZ \frac{1 + i}{2} \oplus \ZZ j \oplus \ZZ \frac{j + ij}{2}
        \end{array} 
    \end{align*}
    \caption{For $\Dim = 5$, equivalence classes of maximally distant elements $\alpha$ in $\OO$, together with maximal orders $\OO'$ that contain them.}
    \label{tab:deep_holes_dim_5}
\end{table}

The proof for the $\Dim = 4$ case is similar, although a little more involved. By Minkowski's second theorem, we have a bound on $\text{vol}(\RR^3/\Lambda)$, but what we want is a bound on $\text{vol}(\RR^4/\OO)$. To get this, note that since $\lambda_3(\Lambda) \leq 2$, this implies that $\lambda_4(\Lambda) \leq 4$, since we can always take two of these three short vectors and multiply them to get another short, linearly independent, vector. Therefore,
    \begin{align*}
        \text{vol}(\RR^4/\OO') \leq \text{vol}(\RR^4/\OO) \leq 4\text{vol}(\RR^3/\Lambda) \leq 16\pi.
    \end{align*}
where $\OO'$ is a maximal $\ddagger$-order containing $\OO$. But
    \begin{align*}
        \text{vol}(\RR^4/\OO') = \frac{\lcm\left(|\disc(H)|,\nrm(\xi)\right)}{4}
    \end{align*}
using the computation of $16\text{vol}(\RR^4/\OO')^2$ done previously by the author\cite{Sheydvasser_2017}. Therefore, we only need to consider the finitely many definite, rational quaternion algebras $H$ with discriminant no more than $64\pi < 202$, and all orthogonal involutions $\ddagger$ such that $\nrm(\xi) < 202$. The maximal $\ddagger$-orders $\OO'$ of any such quaternion algebra with involution will be orders of the form $\OO'' \cap {\OO''}^\ddagger$ with discriminant $|\disc(H)|\nrm(\xi)$, where $\OO''$ is a maximal order of $H$\cite{Sheydvasser_2017}---there are only finitely many of these as well. Finally, any order $\OO$ with covering radius $1$ must be a sub-$\ddagger$-order of such an $\OO'$ possessing a basis of elements in $\OO'$ of length no more than $4$---once again, there are only finitely many choices. Thus, we see that all possible candidates $\OO$ can be enumerated; the results are given in Table \ref{tab:semi_euclidean_dim_4}.

It remains to check that these orders are semi-$\ddagger$-Euclidean---the argument is essentially the same as for the $\Dim = 5$ case: to start, we compute the covering radius for $\OO^+$. If it is less than $1$, then we know that $\OO$ is $\ddagger$-Euclidean. If it is exactly $1$, then we compute holes $u \in H$ that are distance $1$ from $\OO^+$ and note that in each case there exists a maximal $\ddagger$-order $\OO'$ such that $u \in \OO' \supset \OO$; these are collected in Table \ref{tab:deep_holes_dim_4}. These maximal $\ddagger$-orders $\OO'$ are $\ddagger$-Euclidean, and therefore $\OO$ is semi-$\ddagger$-Euclidean. The $\ddagger$-orders $\OO$ are not themselves $\ddagger$-Euclidean as it is known that $\OO$ is $\ddagger$-Euclidean only if $\OO^+ = \OO'^+$ for some maximal $\ddagger$-order $\OO'$ \cite{Sheydvasser_2021}.
\end{proof}

\begin{table}
    \centering
    \begin{align*}
         \begin{array}{c|l|l|l}
            H & \OO & \alpha & \OO' \\ \hline
              \left(\frac{-1,-2}{\QQ}\right) & \ZZ \oplus \ZZ i \oplus \ZZ j \oplus \ZZ \frac{j + ij}{2} & \frac{1 + i + j}{2} & \ZZ \oplus \ZZ i \oplus \ZZ \frac{1 + i + j}{2} \oplus \ZZ \frac{1 + i + ij}{2} \\
              & \ZZ \oplus 3\ZZ i \oplus \ZZ \frac{1 + 3i + j}{2} \oplus \ZZ \frac{2i + j + ij}{2} & \pm i & \\
              & \ZZ \oplus \ZZ i \oplus \ZZ j \oplus \ZZ ij & \frac{1 + i + j}{2} &  \\
              & \ZZ \oplus 2 \ZZ i \oplus \ZZ \frac{1 + i + j}{2} \oplus \ZZ \frac{1 + i - j + 2ij}{2} & i & \\
              & \ZZ \oplus 3\ZZ i \oplus \ZZ \frac{1 + 3i+j}{2} \oplus  \frac{j + 3ij}{2} & \pm i & \\ \hdashline
              \left(\frac{-1,-3}{\QQ}\right) & \ZZ \oplus 2\ZZ i\oplus \ZZ \frac{1 + 2i + j}{2} \oplus \ZZ \frac{i + ij}{2} & i & \ZZ \oplus \ZZ i \oplus \ZZ \frac{1 + j}{2} \oplus \ZZ \frac{i + ij}{2} \\
              & \ZZ \oplus 2\ZZ i \oplus \ZZ \frac{1 + 2i + j}{2} \oplus \ZZ \frac{1 - j + 2ij}{2} & i &
        \end{array} 
    \end{align*}
    \caption{For $\Dim = 4$, equivalence classes of maximally distant elements $\alpha$ in $\OO^+$, together with maximal $\ddagger$-orders $\OO'$ that contain them.}
    \label{tab:deep_holes_dim_4}
\end{table}

\section{Construction of Fundamental Domains:}

We have shown that if the covering radius of $\OO$ (or $\OO^+$) is no more than $1$, then $\OO$ is semi-Euclidean (or semi-$\ddagger$-Euclidean). It remains for us to show that if the covering radius is larger than $1$, then the group $\mathcal{E}$ generated by elementary matrices is infinite index in $G$---equivalently, that $\mathcal{E}$ is not a lattice. Our approach is to first replace $\mathcal{E}$ with a commensurable group, which we shall do in stages. Throughout, we shall write $\Lambda$ for either $\OO$ (if $\Dim = 3,5$) or $\OO^+$ (if $\Dim = 4$).

\begin{definition}
Let $W$ be the subgroup of $\Isom(\HH^{\Dim})$ generated by all elements
    \begin{align*}
        E_\alpha:\partial\HH^{\Dim} &\rightarrow \partial\HH^{\Dim} \\
        z &\mapsto \begin{pmatrix} \alpha & 1 \\ -1 & 0 \end{pmatrix}.z = -z^{-1} - \alpha
    \end{align*}
with $\alpha \in \Lambda$.
\end{definition}

The group $W$ is commensurable with $\mathcal{E}$---specifically, any element in $\mathcal{E}$ can be written in the form
    \begin{align*}
        \begin{pmatrix} u & 0 \\ 0 & v \end{pmatrix}\gamma
    \end{align*}
for some $u,v \in \OO^\times$ and $\gamma \in W$ \cite{Litman_Sheydvasser_2021}---therefore, $W$ is a subgroup of $\mathcal{E}$ of index at most $|\OO^\times|^2/4$. However, we will need a slightly larger subgroup for our purposes.

\begin{definition}
Let $H$ be the subgroup of $\Isom(\HH^{\Dim})$ generated by all elements
    \begin{align*}
        E_\alpha: \partial \HH^{\Dim} &\rightarrow \partial \HH^{\Dim} \\
        z & \mapsto \begin{pmatrix} \alpha & 1 \\ -1 & 0 \end{pmatrix}.z =-z^{-1} - \alpha
    \end{align*}
with $\alpha \in \Lambda$ and by
    \begin{align*}
        \psi: \partial \HH^{\Dim} &\rightarrow \partial \HH^{\Dim} \\
        z &\mapsto -\overline{z}.
    \end{align*}
Similarly, let $K$ be the subgroup of $\Isom(\HH^{\Dim})$ generated by all elements
    \begin{align*}
        \phi_\alpha: \partial \HH^{\Dim} &\rightarrow \partial \HH^{\Dim} \\
        z &\mapsto \overline{(z - \alpha)}^{-1} + \alpha
    \end{align*}
and
    \begin{align*}
        T_\alpha: \partial \HH^{\Dim} &\rightarrow \partial \HH^{\Dim} \\
        z &\mapsto z + \alpha.
    \end{align*}
\end{definition}

\begin{lemma}\label{essential relations}
For all $\alpha,\beta \in \Lambda$, the following relations hold.
    \begin{align*}
        T_\alpha \circ \phi_\beta &= \phi_{\alpha + \beta} \circ T_\alpha \\
        \psi \circ E_\alpha &= E_{-\overline{\alpha}} \circ \psi \\
        &=  \phi_{\overline{\alpha}} \circ T_{\overline{\alpha}}\\
        \psi \circ \phi_\alpha &= \phi_{-\overline{\alpha}} \circ \psi \\
        &= E_{\overline{\alpha}} \circ T_{-\alpha} \\
        \psi \circ T_\alpha &= T_{-\overline{\alpha}} \circ \psi.
    \end{align*}
\end{lemma}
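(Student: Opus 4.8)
The plan is to verify each identity by direct computation of the boundary action, since every map in sight ($E_\alpha$, $\phi_\alpha$, $T_\alpha$, and $\psi$) is given by an explicit formula on $\partial\HH^{dim}$. Before grinding through the five relations, I would record the handful of facts about the standard involution on $H_\RR$ that the whole computation rests on: that $\overline{\cdot}$ is additive and reverses products ($\overline{zw}=\overline{w}\,\overline{z}$), that $z\overline{z}=\overline{z}z=\nrm(z)$ is a real scalar, and that consequently $z^{-1}=\overline{z}/\nrm(z)$, so that conjugation and inversion commute, $\overline{z^{-1}}=\overline{z}^{\,-1}$, and negation passes through inversion, $(-z)^{-1}=-z^{-1}$. (The $dim=3$ case is the commutative specialization and needs no separate treatment.) These identities are exactly what is needed to push a bar past an inverse in every one of the relations.

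Then I would dispatch the relations one at a time by applying both sides to a generic boundary point $z$ and simplifying to a common normal form. The relation $T_\alpha\circ\phi_\beta=\phi_{\alpha+\beta}\circ T_\alpha$ is immediate, since both sides send $z$ to $\overline{(z-\beta)}^{\,-1}+\alpha+\beta$. The relations involving $\psi$ are where the involution identities do their work: for instance $\psi\circ E_\alpha$ sends $z\mapsto -\overline{(-z^{-1}-\alpha)}=\overline{z}^{\,-1}+\overline{\alpha}$, and one checks that $E_{-\overline{\alpha}}\circ\psi$ and $\phi_{\overline{\alpha}}\circ T_{\overline{\alpha}}$ both produce the same expression, which gives the two-line relation for $\psi\circ E_\alpha$. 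The relation for $\psi\circ\phi_\alpha$ is handled identically, the common normal form being $-(z-\alpha)^{-1}-\overline{\alpha}$, and $\psi\circ T_\alpha=T_{-\overline{\alpha}}\circ\psi$ is the easy one, with both sides giving $-\overline{z}-\overline{\alpha}$. For the relations stated as a chain of two equalities, the only extra content beyond a single composition is verifying that the two distinct right-hand compositions reduce to the same formula, which again is just the involution bookkeeping.

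The main obstacle is purely one of care rather than of idea: because $H_\RR$ is non-commutative, one must resist reordering factors, and the recurring step of moving a conjugation bar across an inverse or a sign must be justified by the identities collected above rather than by inspection. A secondary point worth a sentence is the $dim=4$ case, where $z$ ranges over $H_\RR^+$ and $\alpha\in\Lambda=\OO^+$, so one should confirm that each composite again lands in the appropriate copy of $\partial\HH^{dim}$; this follows because the generating maps were defined precisely so as to preserve the boundary, so no separate check is needed for the relations themselves. An alternative I would keep in reserve is to lift $E_\alpha$ and $T_\alpha$ to their matrices in $SL(2,H_\RR)$ and encode $\psi$ and $\phi_\alpha$ as conjugate-linear matrices, reducing each relation to a matrix identity modulo the center; this is cleaner for the orientation-preserving pieces but adds bookkeeping for the bar, so I expect the direct boundary computation to be the shortest route.
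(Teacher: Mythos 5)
Your proposal is correct and matches the paper's approach: the paper's entire proof of this lemma is the one-line remark ``This is basic algebra,'' meaning exactly the direct boundary computation you carry out, and your normal forms ($\overline{z}^{\,-1}+\overline{\alpha}$ for the $\psi\circ E_\alpha$ chain and $-(z-\alpha)^{-1}-\overline{\alpha}$ for the $\psi\circ\phi_\alpha$ chain) check out. The quaternionic involution identities you isolate up front ($\overline{z^{-1}}=\overline{z}^{\,-1}$ via $z^{-1}=\overline{z}/\nrm(z)$, and $(-z)^{-1}=-z^{-1}$) are precisely the non-commutative bookkeeping the verification requires, so nothing is missing.
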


\begin{proof}
This is basic algebra.
\end{proof}

\begin{lemma}
Both $W$ and $K$ are index two subgroups of $H$.
\end{lemma}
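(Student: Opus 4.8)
The plan is to reduce both index computations to a single structural fact: the reflection $\psi$ normalizes each of $W$ and $K$, and $H$ is generated by each of them together with $\psi$. Since $\psi^2 = \mathrm{id}$ (indeed $\psi(\psi(z)) = -\overline{-\overline{z}} = z$), this immediately makes $W$ and $K$ normal in $H$ of index at most two, so that everything comes down to checking $\psi \notin W$ and $\psi \notin K$. First I would record that $\Lambda$ is closed under $\alpha \mapsto -\overline{\alpha}$: for $dim = 3,5$ this holds because orders are stable under the standard involution, and for $dim = 4$ because $-\overline{\alpha}$ again has vanishing $ij$-component when $\alpha \in \OO^+$, so it lies in $\OO \cap H^+ = \OO^+$.

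With this closure in hand, Lemma \ref{essential relations} gives $\psi E_\alpha \psi^{-1} = E_{-\overline{\alpha}} \in W$, so $\psi$ normalizes the generators of $W$; as $H = \langle W, \psi\rangle$ by definition, $W \triangleleft H$ and $[H:W] \le 2$. The same relations give $\psi \phi_\alpha \psi^{-1} = \phi_{-\overline{\alpha}}$ and $\psi T_\alpha \psi^{-1} = T_{-\overline{\alpha}}$, both in $K$, so $\psi$ normalizes $K$ as well. To see that $K \le H$ and that $\langle K, \psi\rangle = H$, I would use the elementary identities $\psi \phi_0 = E_0$, $T_\alpha = E_{-\alpha} E_0^{-1}$, and $\phi_\alpha = T_\alpha \phi_0 T_\alpha^{-1}$, which place every generator of $K$ inside $H$, together with the reverse identity $E_\alpha = T_{-\alpha}\psi\phi_0$, which exhibits every generator of $H$ inside $\langle K, \psi\rangle$. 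Hence $K \triangleleft H$ and $[H:K] \le 2$ as well, and it remains only to rule out $\psi \in W$ and $\psi \in K$.

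The first exclusion is easy and robust: each $E_\alpha$ is orientation-preserving (it lies in $\Mob^0(dim-1)$), so $W \subseteq \Isom^0(\HH^{dim})$, whereas $\psi\colon z \mapsto -\overline{z}$ reverses orientation in every dimension (one checks directly that its linear part has determinant $-1$ on $\RR^{dim-1}$). Thus $\psi \notin W$ and $[H:W] = 2$.

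The genuinely delicate point — and the step I expect to be the main obstacle — is $\psi \notin K$. Here orientation is useless, since $K$ already contains the orientation-reversing element $\phi_0$. Instead I would argue dynamically on the boundary. Both $\psi$ and every $T_\alpha$ fix $\infty \in \partial\HH^{dim}$, so $\psi \in K$ would force $\psi \in \mathrm{Stab}_K(\infty)$; the crux is therefore to show that this stabilizer is exactly the translation lattice $\langle T_\alpha : \alpha \in \Lambda\rangle$. Since $\psi$ is not a translation (it also fixes $0$ and is orientation-reversing), this would give $\psi \notin K$ and hence $[H:K] = 2$. Establishing that the peripheral subgroup at $\infty$ is precisely the translations is exactly where the running hypothesis of this section — that the covering radius exceeds $1$ — must be used: it is what makes $\infty$ a bona fide cusp of the fundamental domain constructed below, with horoball stabilizer the translations. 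This input is not cosmetic. When $\OO$ is Euclidean the elementary matrices generate everything, so $E_0$ lies in the group generated by the upper and lower unipotent translations; then $\psi = \phi_0 E_0 \in K$ and in fact $K = H$. Some geometric hypothesis of this kind is therefore unavoidable, and pinning down $\mathrm{Stab}_K(\infty)$ is the one place where real work, rather than the formal manipulation of Lemma \ref{essential relations}, is required.
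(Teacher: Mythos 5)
Your structural reduction---$\psi$ normalizes $W$ and $K$, generates $H$ together with each, so everything hinges on $\psi \notin W$ and $\psi \notin K$---is sound, and your orientation argument for $\psi \notin W$ is correct. You are in fact being more careful than the paper, whose proof derives both index statements from the normal forms $\psi^k \circ E_{\alpha_1} \circ \cdots \circ E_{\alpha_n}$ and $\psi^k \circ \phi_{\alpha_1} \circ \cdots \circ \phi_{\alpha_n} \circ T_\beta$ and never checks that the cosets are distinct. But the step you flag as the main obstacle is not merely delicate: it is impossible, and in a stronger way than your Euclidean caveat suggests. One has $\psi \in K$ for \emph{every} order, with no hypothesis at all, because $0, \pm 1 \in \Lambda$ always. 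Indeed, chasing a boundary point through generators of $K$,
\begin{align*}
z \;\overset{T_{-1}}{\longmapsto}\; z - 1 \;\overset{\phi_{-1}}{\longmapsto}\; \overline{z}^{-1} - 1 \;\overset{\phi_{0}}{\longmapsto}\; (1-z)^{-1}z \;\overset{\phi_{-1}}{\longmapsto}\; \overline{\left((1-z)^{-1}\right)}^{-1} - 1 = -\overline{z},
\end{align*}
where the third step uses $(z^{-1}-1)^{-1} = (1-z)^{-1}z$ and the fourth uses $(1-z)^{-1}z + 1 = (1-z)^{-1}$; every step is valid verbatim over the quaternions, so in all cases $dim = 3,4,5$ we get $\psi = \phi_{-1} \circ \phi_0 \circ \phi_{-1} \circ T_{-1} \in K$. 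Geometrically this is transparent: $\phi_{-1}\phi_0\phi_{-1}$ is inversion in $\phi_{-1}(S_0)$, and since the unit sphere $S_0$ passes through $-1$, the center of $S_{-1}$, that image is the vertical hyperplane $\{\re(z) = -1/2\}$; the resulting Euclidean reflection composed with $T_{-1}$ is exactly $z \mapsto -\overline{z}$.

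Consequently $K = H$ (as $H = \langle K, \psi \rangle$), so the statement you were asked to prove is itself false for $K$: $[H:K] = 1$, not $2$ (the claim for $W$ is fine). The paper's proof contains precisely the gap you were trying to fill: the relations of Lemma \ref{essential relations} preserve the parity of the number of $\psi$'s in a word, but the generators of $K$ satisfy relations that are not consequences of them, and the identity above exhibits $\psi$ both as $\psi^1 \circ (\text{empty word})$ and in the form $\phi_{\alpha_1}\cdots\phi_{\alpha_n}T_\beta$ with $k = 0$, so the exponent $k$ is not a well-defined invariant. Your proposed repair cannot close this: $\mathrm{Stab}_K(\infty)$ always contains the hyperplane reflection $\phi_{-1}\phi_0\phi_{-1}$ (and $\psi$ itself), hence is strictly larger than the translation lattice regardless of the covering radius; and your diagnosis that Euclideanness is what lets $\psi$ into $K$ is off---nothing about $\OO$ beyond $1 \in \Lambda$ is used. (The same distance-one configuration is the reason the \emph{exterior} dihedral angle between $S_0$ and $S_{-1}$ is $2\pi/3$ rather than $\pi/3$, which also undermines the chamber claim for $\mathcal{R}$ in Theorem \ref{thm: fundamental domain description}.) The damage downstream is contained, since the proof of Theorem \ref{thm: main theorem} only uses that $K$ is commensurable with $\mathcal{E}$, and this survives via $W \subseteq K \subseteq H$ and $[H:W] = 2$; but as proofs of the lemma as stated, both your attempt and the paper's fail---yours at the step you yourself singled out, the paper's at the unexamined assertion that $K$ and $\psi K$ are distinct cosets.
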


\begin{proof}
That $W$ is a subgroup of $H$ is obvious. That $K$ is also a subgroup follows from the observation that any generator $E_\alpha = \psi \circ \phi_{\overline{\alpha}} \circ T_{\overline{\alpha}}$ per Lemma \ref{essential relations}, and so $H$ can equivalently be considered as the group generated by elements $\psi$, $\phi_\alpha$, and $T_\alpha$. Furthermore, using the relations in Lemma \ref{essential relations}, it follows that any element $\gamma \in H$ can be written in the form
    \begin{align*}
        \gamma = \psi^k \circ E_{\alpha_1} \circ E_{\alpha_2} \circ \ldots \circ E_{\alpha_n}
    \end{align*}
for some $k \in \{0,1\}$ and $\alpha_1,\ldots,\alpha_n \in \Lambda$, or in the form
    \begin{align*}
        \gamma = \psi^k \circ \phi_{\alpha_1} \circ \phi_{\alpha_2} \circ \ldots \circ \phi_{\alpha_n} \circ T_\beta
    \end{align*}
for some $k \in \{0,1\}$ and $\alpha_1,\ldots,\alpha_n,\beta \in \Lambda$. The first form proves that $W$ is an index two subgroup of $H$; the second form proves that $K$ is an index two subgroup of $H$.
\end{proof}

\begin{lemma}\label{dihedral angles}
If $S_1, S_2$ are intersecting unit spheres in $\RR^{\Dim - 1}$ centered at points in $\Lambda$, then the dihedral angle between them is either $0$, $\pi/3$, or $\pi/2$.
\end{lemma}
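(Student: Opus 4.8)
The plan is to reduce the hyperbolic dihedral angle to a purely Euclidean angle in the boundary $\partial\HH^{dim} \cong \RR^{dim-1}$, and then to exploit the fact that the two centers lie in the integral lattice $\Lambda$. The spheres $S_1, S_2$ bound totally geodesic hyperplanes (hemispheres) in $\HH^{dim}$, and because the upper half-space model is conformal to the Euclidean metric, the dihedral angle between these hyperplanes equals the Euclidean angle at which $S_1$ and $S_2$ meet in the boundary. (Equivalently, this angle is computed by the inversive product of the two spheres; it is constant along the ridge $S_1 \cap S_2$ because the hyperplanes are totally geodesic.) So it suffices to show that two intersecting unit spheres centered at distinct points $p_1, p_2 \in \Lambda$ meet at Euclidean angle $0$, $\pi/3$, or $\pi/2$.

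First I would record the elementary angle formula. For two unit spheres whose centers are at Euclidean distance $d = |p_1 - p_2|$, the law of cosines applied to the triangle with vertices $p_1$, $p_2$, and a common point yields $\cos\theta = 1 - d^2/2$, where $\theta$ is the angle between the spheres (up to replacing $\theta$ by its supplement, which corresponds to measuring the dihedral angle on the other side). Two unit spheres share a point exactly when $d \le 2$, so any intersecting --- possibly tangent --- configuration forces $0 < d^2 \le 4$.

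The crucial observation is that $d^2$ is a positive integer. Under each of the identifications fixed in Section \ref{section: basic definitions} (namely $\RR^2 \cong \CC$, $\RR^3 \cong H_\RR^+$, and $\RR^4 \cong H_\RR$), the squared Euclidean length of a vector is exactly its reduced norm, so that $d^2 = \nrm(p_1 - p_2)$. Since $p_1 - p_2$ is a nonzero element of $\Lambda \subseteq \OO$, and every element of an order is integral over $\ZZ$, its reduced norm is a positive integer. Hence $d^2 \in \{1, 2, 3, 4\}$. Substituting into the angle formula then finishes the proof: $d^2 = 1$ gives $\cos\theta = 1/2$, so $\theta = \pi/3$; $d^2 = 2$ gives $\cos\theta = 0$, so $\theta = \pi/2$; $d^2 = 3$ gives $\cos\theta = -1/2$, i.e. $\theta = 2\pi/3$, whose associated dihedral angle (taken as the one not exceeding $\pi/2$) is again $\pi/3$; and $d^2 = 4$ gives $\cos\theta = -1$, corresponding to externally tangent spheres with dihedral angle $0$.

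The only genuine subtleties, and the points I would treat most carefully, are the bookkeeping of angle versus supplement in the $d^2 = 3$ case, and verifying that the norm form really does agree with the Euclidean metric in the $dim = 4$ setting, where $\Lambda = \OO^+$: one must check that the chosen identification $\RR^3 \cong H_\RR^+$ is an isometry carrying squared Euclidean length to reduced norm (so that $\nrm(x + yi + zj) = x^2 - a y^2 - b z^2$ is precisely the squared length, with $a, b < 0$). Once this is in place, the integrality of $d^2$ is immediate and the rest is a four-case arithmetic check.
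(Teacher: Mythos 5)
Your proof is correct and takes essentially the same approach as the paper: the paper's proof likewise observes that $\nrm(z_1 - z_2) \in \ZZ$ forces the centers of intersecting unit spheres to lie at distance $1$, $\sqrt{2}$, $\sqrt{3}$, or $2$, and then reads off the corresponding angles $\pi/3$, $\pi/2$, $\pi/3$, $0$ via the same law-of-cosines computation it leaves implicit as ``an easy computation.'' Your added care with the angle-versus-supplement bookkeeping in the $d^2 = 3$ case and with checking that the identifications of $\RR^{dim-1}$ with $\CC$, $H_\RR^+$, and $H_\RR$ carry squared Euclidean length to the reduced norm (so that integrality of elements of $\OO$ gives $d^2 \in \ZZ$) merely fills in details the paper omits.
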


\begin{proof}
Let $z_1, z_2$ be the centers of $S_1,S_2$. Then $\nrm(z_1 - z_2) \in \ZZ$, which means that the centers of $S_1$ and $S_2$ can be distance $1$, $\sqrt{2}$, $\sqrt{3}$, or $2$ apart. It is an easy computation that the corresponding dihedral angles are $\pi/3$, $\pi/2$, $\pi/3$, and $0$.
\end{proof}

\begin{theorem}\label{thm: fundamental domain description}
The group $K$ is a lattice if and only if the covering radius of $\Lambda$ is no more than $1$. Moreover, $K$ always admits a convex fundamental polyhedron with finitely many faces---consequently, it is always geometrically finite.
\end{theorem}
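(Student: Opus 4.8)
The plan is to exhibit an explicit convex fundamental polyhedron for $K$ and read off both claims from its shape. Write a point of $\HH^{dim}$ as $(z,t)$ with $z \in \RR^{dim-1}$ and $t > 0$, let $P \subset \RR^{dim-1}$ be the Dirichlet (Voronoi) cell of $\Lambda$ about the origin, and set
\begin{align*}
    \mathcal{F} = \left\{ (z,t) \in \HH^{dim} : z \in P \text{ and } |z - \alpha|^2 + t^2 \geq 1 \text{ for all } \alpha \in \Lambda \right\};
\end{align*}
in words, $\mathcal{F}$ is the vertical chimney over $P$ with every open unit ball centered at a lattice point removed. Because $P$ is cut out by Euclidean perpendicular bisectors, the chimney over $P$ is the intersection of the vertical (totally geodesic) half-spaces bounding it; likewise the exterior of each unit ball is a hyperbolic half-space bounded by the totally geodesic unit hemisphere. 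Thus $\mathcal{F}$ is an intersection of geodesic half-spaces and is convex.

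I would first establish geometric finiteness, i.e.\ that $\mathcal{F}$ has finitely many faces. The vertical faces are finite in number since $P$ is a finite-sided polytope, and $T_\alpha$ pairs the bisector face $F_{-\alpha}$ with $F_\alpha$, exactly as for the cusp at $\infty$. A unit hemisphere centered at $\alpha$ contributes a face only if it meets the chimney over $P$, which forces $\alpha$ to lie within bounded distance of $P$; only finitely many lattice points qualify. Hence $\mathcal{F}$ is a finite-sided convex polyhedron, which already yields the last sentence of the statement regardless of the covering radius.

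Next I would verify that $\mathcal{F}$ is in fact a fundamental domain for $K$ by invoking Poincar\'e's polyhedron theorem. The face-pairing transformations are the translations $T_\alpha$ on the vertical faces and the reflections $\phi_\alpha$ on the spherical faces, each $\phi_\alpha$ being an involution fixing its hemisphere pointwise. The crucial hypothesis is the edge-cycle condition: around each codimension-two face the dihedral angles of the tiling copies of $\mathcal{F}$ must be integer submultiples of $\pi$, so that finitely many copies close up around the edge. For edges lying on two spherical faces this is precisely Lemma \ref{dihedral angles}, which forces the occurring angles to be $\pi/3$ or $\pi/2$ (the value $0$ corresponding to ideal, cusp-type edges where the hemispheres are tangent); for edges where a vertical face meets a spherical one, and for the edges of $\partial P$, the conditions follow from the translational symmetry of $\Lambda$ together with a direct angle computation. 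Carrying out this verification and the bookkeeping of the resulting edge cycles is, I expect, the main obstacle in the argument.

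Finally, the dichotomy drops out of computing when $\mathcal{F}$ has finite hyperbolic volume. The cusp at $\infty$, the top of the chimney, always contributes finite volume because $\mathrm{vol}(P)\int_{t_0}^\infty t^{-dim}\,dt$ converges for $dim \geq 3$. The only way $\mathcal{F}$ can have infinite volume is for it to descend to the boundary over a set of positive measure, that is, for there to be an open set of points $x \in P$ with $|x-\alpha| > 1$ for every $\alpha \in \Lambda$; over such a set the chimney reaches $t = 0$ and $\int_0 t^{-dim}\,dt$ diverges. Since the distance-to-$\Lambda$ function is $\Lambda$-periodic, such a set exists exactly when some point lies at distance strictly greater than $1$ from $\Lambda$, i.e.\ exactly when $\mu(\Lambda) > 1$. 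When $\mu(\Lambda) \leq 1$ the unit balls cover $\RR^{dim-1}$, so $\mathcal{F}$ meets the boundary only in the isolated tangency points coming from deep holes of depth exactly $1$, which are ordinary finite-volume cusps. Therefore $\mathcal{F}$, and hence $K$, is a lattice if and only if $\mu(\Lambda) \leq 1$.
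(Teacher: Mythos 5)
Your polyhedron is exactly the one the paper uses---the chimney over a Dirichlet cell for $\Lambda$ minus the open unit balls centered at lattice points---and your finiteness-of-faces argument and your volume dichotomy (divergence of $\int t^{-dim}\,dt$ over a positive-measure uncovered set when $\mu(\Lambda)>1$, versus finite-volume cusps at the depth-$1$ holes when $\mu(\Lambda)\leq 1$) match the paper's, and in fact spell out the volume claim in more detail than the paper does. Where you diverge is in verifying that $\mathcal{F}$ is a fundamental domain: you invoke Poincar\'e's polyhedron theorem, whereas the paper sidesteps it entirely using the algebra of Lemma \ref{essential relations}. The relation $T_\alpha \circ \phi_\beta = \phi_{\alpha+\beta} \circ T_\alpha$ gives every $\gamma \in K$ the normal form $\phi_{\alpha_1}\circ\cdots\circ\phi_{\alpha_n}\circ T_\beta$; equivalently, the geometric reflection group $R$ generated by the $\phi_\alpha$ (for which the exterior region $\mathcal{R}$ is a fundamental domain, by Lemma \ref{dihedral angles} and the standard theory of reflection groups) is normalized by the translations, and $\mathcal{R}$ is translation-invariant. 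Fundamentality of $\mathcal{F} = \mathcal{R} \cap \left(\mathcal{P}\times(0,\infty)\right)$ then needs only a two-case check: if $\gamma = T_\beta$ one uses that $\mathcal{P}\times(0,\infty)$ is fundamental for the translations, and if reflections genuinely occur they push the point out of $\mathcal{R}$. This is the step you should be wary of leaving as ``bookkeeping'': the mixed edge cycles in your Poincar\'e argument (a vertical bisector wall meeting a hemisphere transversally) do close up---the two interior angles at a translation-paired pair of such edges sum to $\pi$ automatically---but the tangency configurations (the angle-$0$ case of Lemma \ref{dihedral angles}, and hemispheres tangent to walls) produce ideal points at which Poincar\'e's theorem requires a separate completeness hypothesis, which is real work, not routine. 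So your route is viable and, if completed, would buy you more than the theorem asks (a presentation of $K$ by face pairings), but the paper's normal-form argument gets the stated conclusion with none of that machinery; I'd recommend either adopting it or at least using it to certify fundamentality before extracting any Poincar\'e-style consequences.
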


\begin{remark}
Three examples of such convex fundamental polyhedra are shown in Figure \ref{fig:fundamental_domains}. The corresponding lattices $\Lambda$ and their covering radii are shown in Figure \ref{fig:covering_radii}.
\end{remark}

\begin{figure}
    \centering
    \begin{tabular}{ccc}
    \includegraphics[height = 0.25\textwidth]{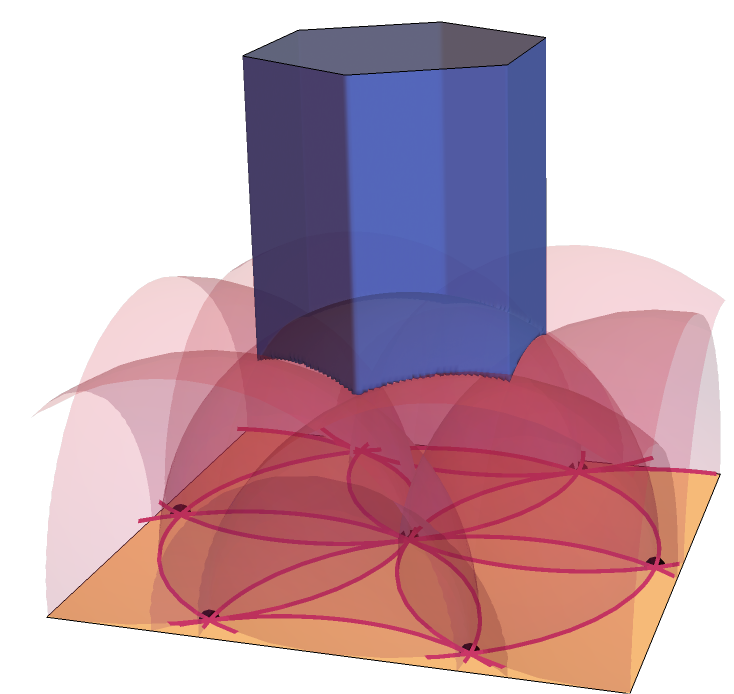} & \includegraphics[height = 0.25\textwidth]{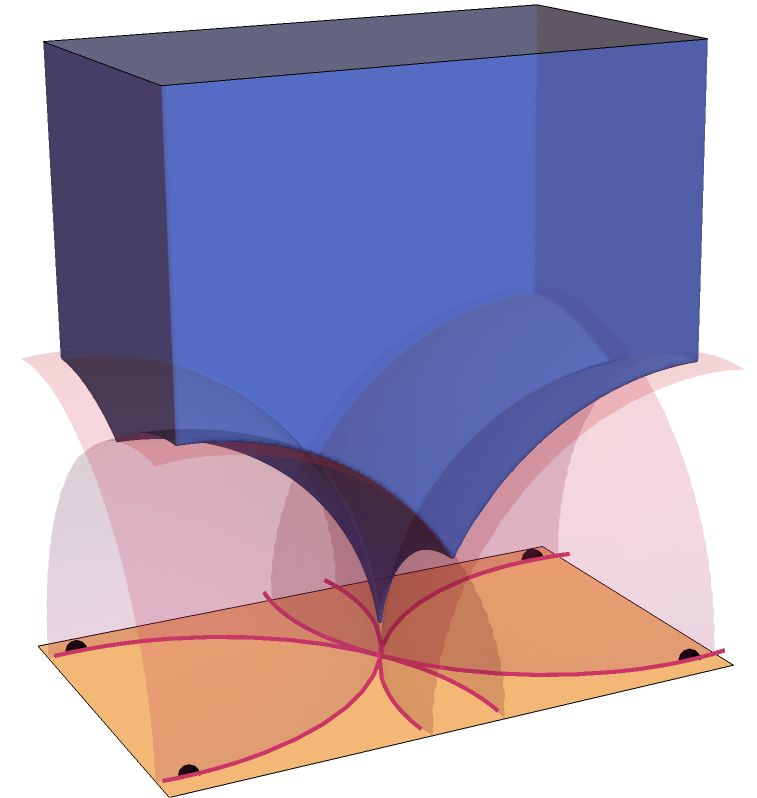} & \includegraphics[height = 0.25\textwidth]{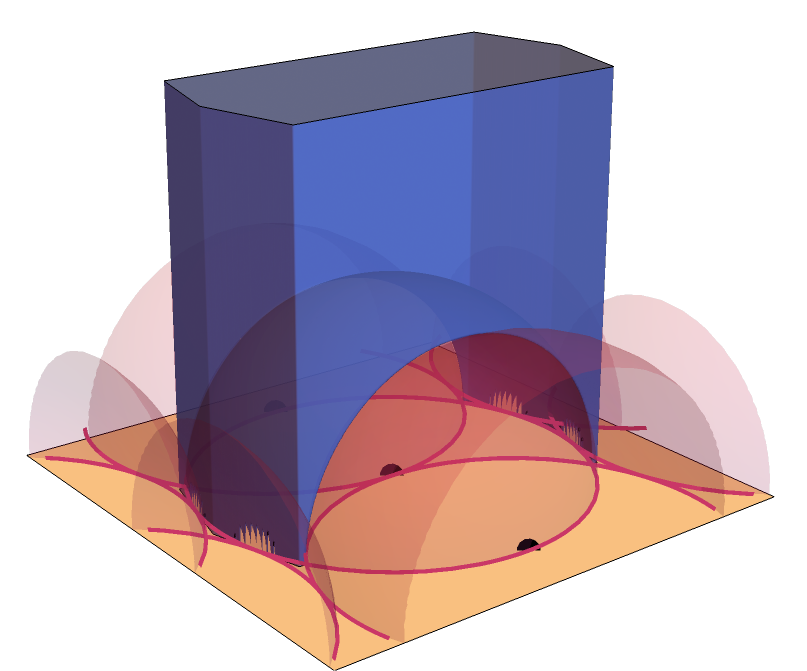}
    \end{tabular}
    \caption{In blue, the fundamental domains for the groups $K$ corresponding to $\ZZ[(1 + \sqrt{-3})/2]$, $\ZZ[\sqrt{-3}]$, and $\ZZ[(1 + \sqrt{-15})/2]$. Unit balls centered at points in $\Lambda$ are drawn in red. The first two groups are lattices; the last one is not.}
    \label{fig:fundamental_domains}
\end{figure}

\begin{proof}
We construct a fundamental domain for $K$ as follows: first, consider the action of the translations $T_\alpha$ on $\RR^{\Dim - 1}$. Since $\Lambda$ is a lattice in $\RR^{\Dim - 1}$, this group of translations admits a fundamental domain $\mathcal{P}$, which we can take to be the interior of some polygon. The details of how we choose to do this are irrelevant---we can take it to be the Dirichlet domain, for example. Then, if we take $\mathcal{P} \times (0,\infty) \subset \HH^{\Dim}$, this is a fundamental domain for this translation group in $\HH^{\Dim}$.

Next, consider the action of the elements $\phi_\alpha$. It is easily checked that these are reflections through unit spheres centered at $\alpha$. Furthermore, these spheres intersect at dihedral angles which are of the form $\pi/k$ for some $k \in \ZZ$, as per Lemma \ref{dihedral angles}. Consequently, the group generated by these reflections is a geometric reflection group, and so it admits a very simple fundamental domain $\mathcal{R} \subset \HH^{\Dim}$---simply take the set of points in $\HH^{\Dim}$ which are in the exterior of all of the aforementioned unit spheres. I claim that 
    \begin{align*}
        \mathcal{F} = \mathcal{R} \cap \left(\mathcal{P} \times (0,\infty)\right)
    \end{align*}
is a fundamental domain for $K$. Here is why: first, note that any point $\rho \in \HH^{\Dim}$ can be moved into the closure of $\mathcal{F}$ by first using the reflection $\phi_\alpha$ to move it into $\mathcal{R}$---but then, one can use translations $T_\alpha$ to move it into $\mathcal{F}$. On the other hand, we need to prove that if $\rho \in \mathcal{F}$, then there does not exist any non-identity element $\gamma \in K$ such that $\gamma(\rho) \in \mathcal{F}$. By Lemma \ref{essential relations}, we know that
    \begin{align*}
        \gamma = \phi_{\alpha_1} \circ \ldots \circ \phi_{\alpha_n} \circ T_{\beta}
    \end{align*}
for some $\alpha_1,\ldots, \alpha_n, \beta \in \Lambda$. If $\gamma$ can be written without reflection, then we use the fact that $\mathcal{P} \times (0,\infty)$ is a fundamental domain for the translation group to conclude that $T_\beta(\rho) = \rho$ if and only if $T_\beta$ is the identity. Otherwise, we note that $T_\beta$ will move $\rho$ to some other point in $\mathcal{R}$, and then the reflections must necessarily move it out of $\mathcal{R}$, as $\mathcal{R}$ is a fundamental domain for the reflection group.

But now, notice that this fundamental domain $\mathcal{F}$ has finite volume in $\HH^{\Dim}$ if and only if the closed unit balls centered at points in $\Lambda$ cover $\RR^{\Dim - 1}$. That is, $K$ is a lattice if and only if the covering radius of $\Lambda$ is no more than $1$. On the other hand, it always admits a convex fundamental polyhedron with finitely many faces.
\end{proof}

\begin{figure}
    \centering
    \begin{tabular}{ccc}
    \includegraphics[height = 0.2\textwidth]{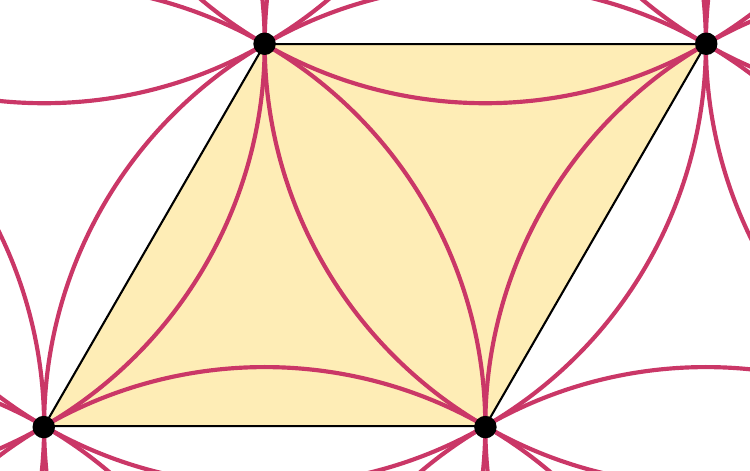} & \includegraphics[height = 0.35\textwidth]{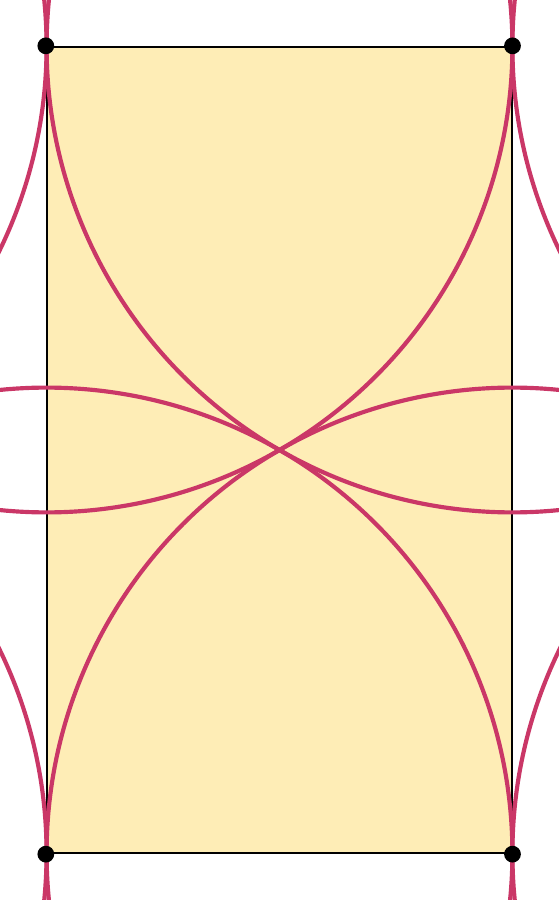} & \includegraphics[height = 0.35\textwidth]{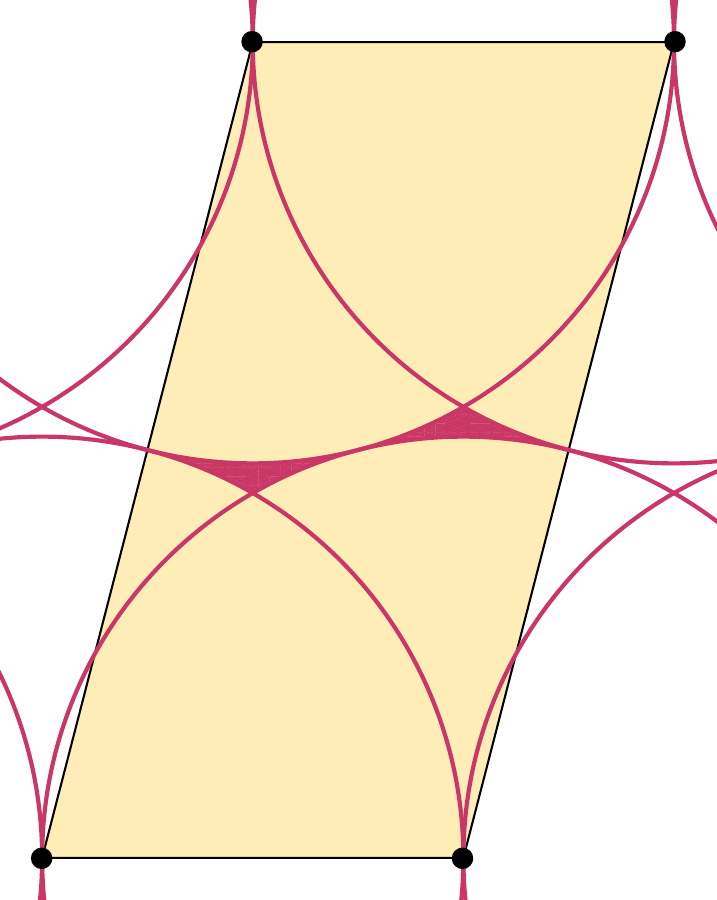}
    \end{tabular}
    \caption{The fundamental parallelograms for $\ZZ[(1 + \sqrt{-3})/2]$, $\ZZ[\sqrt{-3}]$, and $\ZZ[(1 + \sqrt{-15})/2]$. Unit circles centered at points in $\Lambda$ are drawn in red, as is the area not covered by such disks. The covering radius of the first order is less than $1$; the covering radius of the second order is $1$; the covering radius of the last order is more than $1$.}
    \label{fig:covering_radii}
\end{figure}

\begin{remark}
Our approach for the proof of this theorem is directly inspired by---and is very similar to---the construction of fundamental domains for Apollonian-like groups in a paper by Matt Litman and the present author\cite[Theorem 5.2]{Litman_Sheydvasser_2021}.
\end{remark}

This is sufficient for us to prove our main theorem.

\begin{proof}[Proof of Theorem \ref{thm: main theorem}]
For any $\OO$, if the covering radius of $\Lambda$ is no more than $1$, then by Theorem \ref{thm: semi-Euclidean classification} $\OO$ is semi-Euclidean (or semi-$\ddagger$-Euclidean if $\Dim = 4$) as described in Tables \ref{tab:semi_euclidean_dim_3}, \ref{tab:semi_euclidean_dim_4}, and \ref{tab:semi_euclidean_dim_5}; by the existence of Algorithm \ref{alg: semi-Euclidean algorithm}, we know that this implies that $\mathcal{E} = \Gamma$. If the covering radius is more than $1$, then by Theorem \ref{thm: fundamental domain description}, $K$ is not a lattice. But $K$ is commensurable to $\mathcal{E}$, so $\mathcal{E}$ is not a lattice either, which is to say that it is an infinite-index subgroup of $\Gamma$; again, by the existence of Algorithm \ref{alg: semi-Euclidean algorithm}, this proves that $\OO$ is not semi-Euclidean (or semi-$\ddagger$-Euclidean, if $\Dim = 4$).

It remains to prove that if $\mathcal{E}$ is infinite-index, then it is also non-normal. Suppose that it is a normal subgroup of $\Gamma$. Then $\Gamma$ acts on the limit set of $\mathcal{E}$, which implies that its limit set is all of $\partial \HH^{\Dim}$. Since $K$ is commensurable to $\mathcal{E}$, its limit set is also $\partial \HH^{\Dim}$. However, this is impossible: it isn't hard to see that the limit set of $K$ must be contained in the subset of $\partial \HH^{\Dim}$ covered by unit balls centered on points in $\Lambda$; if $\mathcal{E}$ is infinite-index, we know that that this is a proper subset of $\partial \HH^{\Dim}$.
\end{proof}

\section{Open Questions and Discussion:}\label{section: open questions}

Theorem \ref{thm: main theorem} raises many new questions, which we'll now discuss in turn. We begin with the broadest, big picture question.

\begin{problem}
Can Theorem \ref{thm: main theorem} be generalized to any hyperbolic space $\HH^{\Dim}$, and not just for the special cases $\Dim = 3,4,5$? Or, in other words, is unreasonable slightness a general phenomenon?
\end{problem}

\begin{remark}
There is an obvious---if somewhat disappointing---generalization of the result for $\Dim = 2$: since $\Isom^0(\HH^2) \cong PSL(2,\RR)$, we observe that there is only one discrete subring of $\RR$, namely $\ZZ$. Of course, $\ZZ$ is Euclidean and $SL(2,\ZZ) = E(2,\ZZ)$.

It is not clear what can be said for $\Dim > 5$. The accidental isomorphisms that we used to define the problem do not generalize (they are, after all, accidental), and of course, there are no real division algebras larger than $H_\RR$. On the other hand, perhaps trying to find discrete sub-rings of real division algebras is too constrained a viewpoint; perhaps instead one should study lattices of $\RR^{\Dim - 1}$ with some weaker algebraic structure. The geometric construction of the fundamental domain in Theorem \ref{thm: fundamental domain description} can almost certainly be generalized to something of this type; the bigger question is what should be the correct relaxation of the notion of a semi-Euclidean ring?

To sum up, what we are positing is the existence of some kind of algebraic structure $\mathcal{A}$ on $\RR^{\Dim - 1}$---reducing to the normal ring structure for $\Dim \leq 5$---such that $\Isom^0(\HH^{\Dim})$ is isomorphic to something like $PSL(\RR^{\Dim - 1})$ and a ``Euclidean-like" property $P$ so that for all discrete substructures $\Lambda$ of $\mathcal{A}$, $\Lambda$ has property $P$ if and only if ``$PSL(\Lambda)$" is generated by elementary matrices---moreover, if it doesn't have property $P$, then the group generated by elementary matrices is infinite-index.
\end{remark}

\begin{problem}
Let $H$ be a quaternion algebra over an algebraic number field $K \neq \QQ$ and let $\ddagger$ be an orthogonal involution on $H$. Does there exist a maximal order $\OO$ of $H$ such that $SL(n,\OO)$ is not generated by elementary matrices for some $n$? Does there exist a maximal $\ddagger$-order $\OO$ such that $SL^\ddagger(2,\OO)$ is not generated by elementary matrices? Is it possible to define $SL^\ddagger(n,\OO)$ in some natural way---if it is, is it necessarily generated by elementary matrices? If we repeat this analysis with $K = \QQ$ but consider $H$ indefinite, are there any examples where these groups are not generated by elementary matrices?
\end{problem}

\begin{remark}
This question is the natural generalization of the observations made about the structure of $SL(n,\mathfrak{o}_K)$ in Section \ref{section:introduction} to the $\Dim = 4$ and $\Dim = 5$ cases. One possible method to attack this question would be to try to generalize Vaserstein's arguments for $SL(2,\mathfrak{o}_K)$; however, this would require a generalization of the Mennicke symbol that would make sense for noncommutative rings and rings with involutions.
\end{remark}

\begin{problem}
Do all semi-Euclidean and semi-$\ddagger$-Euclidean rings have left class number $1$? Are they always Gorenstein? What other nice properties do they have?
\end{problem}

\begin{remark}
It is an easy calculation to check that all semi-Euclidean orders enumerated in Theorem \ref{thm: main theorem} have left class number $1$ and are Gorenstein orders, but there is no obvious reason why this should be true in general.
\end{remark}

\begin{remark}
One nice property that semi-Euclidean and semi-$\ddagger$-Euclidean rings certainly do not have in general is hereditary-ness---in fact, the Lipshitz order
    \begin{align*}
        \ZZ \oplus \ZZ i \oplus \ZZ j \oplus \ZZ ij \subset \left(\frac{-1,-1}{\QQ}\right),
    \end{align*}
which appears on both in Table \ref{tab:semi_euclidean_dim_4} and Table \ref{tab:semi_euclidean_dim_5} is not even an Eichler order, must less hereditary.
\end{remark}

\begin{problem}
If $R$ is a semi-Euclidean ring, does there exist an involution $\ddagger$ on $R$ such that $(R,\ddagger)$ is a $\ddagger$-Euclidean ring?
\end{problem}

\begin{remark}
While it seems highly unlikely that this is true in general, it is true for all of the orders that we have constructed: if $\OO$ is an order of an imaginary quadratic field $K$, then if we take $\ddagger$ to be complex conjugation, one checks that $\OO^+ = \ZZ$---therefore, if $\OO$ is semi-Euclidean, it is certainly $\ddagger$-Euclidean. This handles the $\Dim = 3$ case. Less trivially, in the $\Dim = 5$ case, we observe that the three semi-Euclidean (but not Euclidean) orders
    \begin{align*}
        \ZZ \oplus \ZZ i \oplus \ZZ j \oplus \ZZ ij &\subset \left(\frac{-1,-1}{\QQ}\right) \\
        \ZZ \oplus 3\ZZ i \oplus \ZZ (i - j) \oplus \ZZ \frac{1 + i + j + ij}{2} &\subset \left(\frac{-1,-1}{\QQ}\right) \\
        \ZZ \oplus \ZZ i \oplus \ZZ j \oplus \ZZ \frac{1 + i + j + ij}{2} &\subset \left(\frac{-3,-1}{\QQ}\right)
    \end{align*}
are isomorphic to the three $\ddagger$-Euclidean orders
    \begin{align*}
        \ZZ \oplus \ZZ i \oplus \ZZ j \oplus \ZZ ij &\subset \left(\frac{-1,-1}{\QQ}\right) \\
        \ZZ \oplus \ZZ i \oplus \ZZ \frac{1 + j}{2} \oplus \ZZ \frac{i + ij}{2} &\subset \left(\frac{-2,-3}{\QQ}\right) \\
        \ZZ \oplus \ZZ i \oplus \ZZ \frac{1 + i + j}{2} \oplus \ZZ \frac{1 + i + ij}{2} &\subset \left(\frac{-1,-6}{\QQ}\right).
    \end{align*}
    
\noindent That the Euclidean orders in the $\Dim = 5$ case are $\ddagger$-Euclidean was already observed\cite{Sheydvasser_2021}.
\end{remark}

\begin{problem}
What interesting examples of semi-Euclidean and semi-$\ddagger$-Euclidean rings are there? Are there infinitely many isomorphism classes among orders of quaternion algebras over number fields? Are there non-commutative examples where the stathm is not multiplicative?
\end{problem}

\begin{remark}
This is essentially an extension of problems 4-6 listed in a prior paper of the author\cite{Sheydvasser_2021}. 
\end{remark}

\begin{problem}
Let $\OO$ be a maximal order of a central simple algebra $A$ over a number field $K$. If $\OO$ is semi-Euclidean, is $\OO$ Euclidean? Similarly, if $\OO$ is a maximal $\ddagger$-order of a CSA with involution $(A,\ddagger)$ over a number field $K$, does $\OO$ being semi-$\ddagger$-Euclidean imply that it is $\ddagger$-Euclidean?
\end{problem}

\begin{remark}
This is true of all of the orders we constructed.
\end{remark}

\subsection*{Acknowledgment.}
The author would like to thank Nir Lazarovich for providing the final nudge that allowed adding ``non-normal" to ``infinite-index" in the statement of the main theorem.

\bibliography{References}
\bibliographystyle{plain}

\end{document}